\renewcommand{\vec}[1]{\ensuremath{\boldsymbol{#1}}}
\newcommand{\eps}{{\varepsilon}}
\newcommand{\Clow}{\underline{C}}
\newcommand{\Chigh}{\overline{C}}
\newcommand{\Oh}{\mathcal{O}}
\newcommand{\fix}[1]{{#1}}
\title{A Boundary-Layer Preconditioner for Singularly Perturbed
  Convection Diffusion\thanks{Submitted to the editors DATE.
\funding{The work of S.M. was partially funded by an NSERC Discovery
  Grant.  The authors wish to acknowledge the Irish Centre for High-End
Computing (ICHEC) for the provision of computational facilities and
support. 
}}}
\author{Scott P. MacLachlan\thanks{Department of Mathematics and Statistics, Memorial University of Newfoundland, St. John's, NL A1C 5S7, Canada
    (\email{smaclachlan@mun.ca}).}  \and Niall
  Madden\thanks{\fix{School of Mathematical and Statistical Sciences},
    National University of Ireland, Galway, Ireland (\email{Niall.Madden@NUIGalway.ie})} \and Th\'{a}i Anh Nhan\thanks{Department of Mathematics and Science,
    Holy Names University, 3500 Mountain Blvd.,
    Oakland, CA 94619, USA (\email{nhan@hnu.edu})}}
\begin{document}

\maketitle

\begin{abstract}
  Motivated by a wide range of real-world problems whose solutions
exhibit boundary and interior layers, the numerical analysis of
discretizations of singularly perturbed differential equations is an
established sub-discipline within the study of the numerical approximation
of solutions to
differential equations.  Consequently, much is known about how to
accurately and stably discretize such equations on \textit{a priori}
adapted meshes, in order to properly resolve the layer structure
present in their continuum solutions.  However, despite being a key step in the
numerical simulation process, much less is known about the efficient and
accurate solution of the linear systems of equations corresponding to
these discretizations.

In this paper, we discuss problems associated with the application of
direct solvers to these discretizations, and we 
propose a preconditioning strategy
that is tuned to the matrix structure induced by using layer-adapted
meshes for convection-diffusion equations, proving a strong
condition-number bound on the preconditioned system in one spatial
dimension, and a weaker bound in two spatial dimensions.  Numerical
results confirm the efficiency of the resulting preconditioners in one
and two dimensions, with
time-to-solution of less than one second for representative problems
on $1024\times 1024$ meshes and up to $40\times$ speedup over standard
sparse direct solvers.
\end{abstract}
\begin{keywords}
Singularly Perturbed Differential Equations; Stable Finite-Difference Discretization; Preconditioning;
Domain Decomposition; Multigrid Methods
\end{keywords}
\begin{AMS}
65F08, 65N22, 65N55
\end{AMS}

\section{Introduction}

We are interested in the design and implementation of
efficient linear solvers for discretizations of singularly
perturbed problems of the form
\begin{equation}\label{eq:1D}
  -\eps u'' - c(x)u' + r(x)u  = f \text{ on } (0,1),
\end{equation}
and
\begin{equation}\label{eq:2D}
  -\eps \Delta u - \vec c(x,y) \cdot \nabla u + r(x,y)u  = f \text{ on } (0,1)^2,
\end{equation}
subject to homogeneous Dirichlet boundary conditions. Here, $\eps \in
(0,1]$ is referred to as the ``perturbation parameter''; in the
cases of primary interest, $\eps \ll 1$.  For smooth forcing functions,
$f$, the behaviour of the solution, $u$, is known to be different for
the reaction-diffusion case (with $c=0$ or $\vec c = \vec 0$) and the
convection-diffusion case (with $c\neq0$ or $\vec c \neq \vec 0$).  As a
result, different discretizations and solver approaches may be appropriate and
effective in the two cases.  Here, we focus on the
convection-diffusion case; a similar strategy for reaction-diffusion problems
was previously proposed and analysed in
\cite{SMacLachlan_NMadden_2013a}, although we note that, as usual, the
non-symmetric case requires much different techniques than the
symmetric and positive-definite one considered therein.

Equations such as \eqref{eq:1D} and \eqref{eq:2D} and their many
variants are common in mathematical modelling since their solutions
exhibit boundary and/or
interior layers. \fix{For example, Morton lists ten typical problems which
feature equations of these types, including models of water and
atmospheric pollution, electric currents in semi-conductors, 
turbulent transport, and financial derivatives~\cite[Chap 1.]{Morton96}. Finite-difference
methods, and upwind schemes in particular, have long been used for
their discretization~\cite[Chap 10]{LeVeque07}}.

\fix{In the context of singularly perturbed problems}, the challenge
for numerical analysts is, usually, to
design implementable methods that resolve any layers present and
guarantee a certain order of convergence with respect to the mesh size,
independent of $\eps$. 
Methods with such a property 
are referred to as ``parameter robust''. Typically, they are
specialist methods involving highly non-uniform meshes, often combined
with  nonstandard discretizations \cite{LiSt12, JAdler_etal_2014d}.
The crux of the issue, for all singularly perturbed problems, is that
the layers are the regions of greatest interest, but they are located in
very narrow regions, with widths that may be as small as
$\mathcal{O}(\eps)$. Thus, resolution of these layers with uniform
grids would require a mesh resolution is
$\mathcal{O}(\eps)$. This is not possible as $\eps \to 0$.  Standard
adaptive mesh refinement techniques, such as $h$- and $p$-refinement
also fail to be robust, since many levels of refinement are needed to
resolve the layer regions when starting from a uniform mesh.  Thus, a
common approach is to use \textit{a priori} adapted meshes that are
chosen to resolve the (known) layer regions \cite{Linss10}.

A further complication is introduced for convection-dominated
problems such as those above: it is well understood that classical
methods, such as central finite-difference methods, yield highly
oscillatory numerical solutions unless, again,  mesh resolution is 
$\mathcal{O}(\eps)$~\cite{RoSt08,StSt18}.
That is, even if one were only interested in qualitatively accurate
solutions away from the layers, one still needs an infeasibly large number
of degrees of freedom.

The solution to this is to use stabilized discretizations along with layer-adapted
grids. In this setting, there are many different approaches for constructing stable
discretizations. We will focus on the simplest, and, arguably, most
commonly used: upwind finite-difference methods. There are many
proposed layer adapted meshes in the literature (see, e.g.,
\cite{Linss10}). For our exposition,
we focus on the most widely studied: the piecewise
uniform mesh of Shishkin~\cite{MiOR12}. However, the analysis extends
immediately to more general layer-adapted meshes; see
Remark~\ref{rem:not just S-meshes}.

There is a rich mathematical theory underpinning the
parameter robustness of upwind finite-difference methods on layer
adapted meshes. However, almost exclusively, this work ignores the issue of
solving the resulting linear systems. This is an oversight, since it
is known that standard direct solvers are surprisingly
inefficient when applied to these
discretizations~\cite{SMacLachlan_NMadden_2013a,TANhan_etal_2018a}. Furthermore,
the convergence of standard iterative methods deteriorates drastically as $\eps
\to 0$, unless specialized preconditioning is employed.

The numerical solution of the linear systems resulting from the upwind
finite-difference discretization of these problems was first
considered in \cite{Ro96}, where it was shown that the condition
number of the one-dimensional problem discretized on Shishkin meshes
with $N$ points scales like $\Oh(N^2/(\eps\ln^2(N)))$, but that a
diagonal preconditioning can be defined to improve this to
$\Oh(N^2/\ln(N))$. Similarly, the performance of Gauss-Seidel for the
one-dimensional problem was considered in \cite{FaSh98}, while
numerical experiments for the two-dimensional problem with Incomplete
LU preconditioners were performed in \cite{AnHe03}.  More robust
preconditioning strategies have also been considered. In particular,
\cite{MaOR02} consider an overlapping multiplicative Schwarz method
for the one-dimensional problem, with \textit{volumetric overlap}
(meaning the overlap between two subdomains has positive volume),
building on existing work showing that this gives parameter-robust
solution of the corresponding continuous Schwarz method
\cite{TPMathew_1998a}.  They prove convergence of the corresponding
discrete Schwarz algorithm as well, and demonstrate robustness of
their technique both with respect to the singular perturbation
parameter and the mesh size, using exact solves of the resulting
subdomain systems.

More recently, \cite{CEcheverria_etal_2018a}
proposed a multiplicative-Schwarz preconditioner for the
one-dimensional model with \textit{minimal overlap} (meaning that
adjacent subdomains share only a single mesh point), again with exact
subdomain solves.  This results in a rank-one spectral structure for
the iteration matrix that allows a full convergence analysis, but
precludes extension of these results to two-dimensional models.  Robust
solution methods based on multigrid principles have also been
considered, with \cite{GaClLi02} considering the two-dimensional model
using a standard multigrid approach for anisotropic differential
operators, based on full coarsening multigrid with rediscretization
determining coarse-grid operators.  In this approach, the inter-grid
transfer operators must be adapted, in order to account for the
Shishkin mesh structure, and alternating line Gauss-Seidel relaxation
is used to account for the resulting anisotropy.

The present work is distinguished by a number of features, though
most prominently the development of a special boundary-layer
preconditioner, which proposes distinct treatment of the different
regions that are induced when using tensor-product layer-adapted meshes.
This is in the spirit of the preconditioner proposed
in~\cite{SMacLachlan_NMadden_2013a} for a reaction-diffusion problem,
but is entirely different in implementation and analysis. The
complications  are due both to the non-symmetric discretization matrices, and the
fact that,  in two dimensions, the mesh has
corner-layer regions where the mesh spacing in the $x$- and
$y$-directions  may, or may not, be
highly anisotropic.
This work is also distinguished by combining thorough  analysis of
the preconditioner in one dimension, with a detailed development of
an efficient preconditioner for two distinct two-dimensional cases,
with supporting heuristics.

\paragraph{Outline} This article in organized as follows.
We consider a one-dimensional problem in Section~\ref{sec:1D}.
The upwind finite-difference method, and layer-adapted Shishkin
mesh, are described in Sections~\ref{sec:discretization}
and~\ref{sec:1D S-mesh}, respectively. In Section~\ref{sec:1D M}, we
propose an idealized preconditioner for this problem, and provide a
detailed analysis in Section~\ref{sec:1D theory}, culminating in
proven $\eps$-robust bounds on the spectrum of the preconditioned
system. We conclude the study of the one-dimensional problem in
Section~\ref{sec:1D numbers} with a presentation of numerical results
verifying the uniform convergence of the difference scheme, a
discussion on a suitable stopping criterion for GMRES, and iteration
counts for preconditioned GMRES, showing the effectiveness of the
strategy as $\eps \to 0$.

Section~\ref{sec:2D} is devoted to two-dimensional problems. The solutions
to such equations can be very different in their nature; depending on
$\vec{c}$ in \eqref{eq:2D}, they may possess intersecting layers that
are both exponential in nature, or a mixture of
exponential and parabolic in nature. We describe the meshes and
discretizations for both these cases. In Section~\ref{sec:2D M}, we present the construction of
a suitable preconditioner for both cases,  focusing on the (coarse)
interior region and anisotropic edge-layer regions, and examining the
convergence rate when an idealized preconditioner is used in the
corner region.  The practical task of preconditioning the corner
region, with a multigrid approach, is discussed at length in
Section~\ref{sec:multigrid in the corner}. In
Section~\ref{sec:2D numbers}, we present results for two test problems,
showing the robust convergence of the scheme, the scaling of the
solving times with $N$ and $\eps$, the robustness of the iteration
counts with respect to $N$ and $\eps$, and the speedup achieved over a
best-in-class direct solver.  Conclusions and remarks on potential
future work are given in Section~\ref{sec:conclusions}.

\section{The one-dimensional problem}
\label{sec:1D}
Recall the one-dimensional problem \eqref{eq:1D}, subject to the
boundary conditions $u(0)=u(1)=0$. We shall assume that
\begin{equation}
  \label{eq:coef assumptions}
  0 < \Clow \leq c(x) \leq \Chigh \quad \text{ and } \quad
  0 \leq r(x) \quad \text{ for all } x \in [0,1].
\end{equation}

\subsection{Discretization}
\label{sec:discretization}
We first introduce the (for now, arbitrary) mesh
$\Omega^N:=\{0=x_0< x_1< \dots < x_N=1\}$. We use the notation
$c_i := c(x_i)$, $r_i := r(x_i)$, $h_i = x_i - x_{i-1}$, and
$\bar h_i = (h_i + h_{i+1})/2$.

The standard (centred) second-order finite-difference scheme on this mesh has as
its stencil, at mesh point $i$,
\begin{equation}\label{eq:central stencil}
  \left[ -\frac{\eps}{h_i\bar h_i} + \frac{c_i}{2{\bar h_i}},
    \frac{\eps}{\bar h_i}\left(\frac{1}{h_i} +
      \frac{1}{h_{i+1}}\right) + r_i ,
  -\frac{\eps}{h_{i+1}\bar h_i} -\frac{c_i}{2{\bar h_i}} \right].
\end{equation}
Although this scheme is formally second-order (at least on uniform meshes), it is well known (see,
e.g,~\cite[Remark 3.1]{StSt18}) that it
usually  leads to highly oscillatory solutions when applied on a
uniform mesh. Various explanations are possible. Here, we note that the
scheme associated with \eqref{eq:central stencil} cannot yield
oscillatory solutions if the matrix is an M-matrix~\cite{Berman1994}, which will be the case if it is diagonally dominant
and has its only positive entries on the diagonal. However, it is
clear that the subdiagonal entry is \fix{non-positive only when $\eps \geq c_i h_i/2$}.
For arbitrarily small  $\eps$, generating a discretization matrix that is an M-matrix would, thus, require that
$h_i$ be $\mathcal{O}(\eps)$ for each $i$.
Since that is not a reasonable requirement, an upwind finite-difference
method is preferred, even though it is formally of lower order. Its stencil, at mesh point $i$, is
\begin{equation}
  \label{eq:stencil_1d}
  \left[ -\frac{\eps}{h_i\bar h_i} , \frac{\eps}{\bar h_i}\left(\frac{1}{h_i} +
      \frac{1}{h_{i+1}}\right) + \frac{c_i}{h_{i+1}} + r_i ,
    -\frac{\eps}{h_{i+1}\bar h_i} -\frac{c_i}{h_{i+1}} \right].
\end{equation}
Although this scheme is only first-order, its system matrix is an
M-matrix for any $\eps$ and mesh, and solutions cannot 
feature  spurious oscillations on any mesh.

\subsection{Boundary layer fitted meshes}
\label{sec:1D S-mesh}

One can apply the scheme in \eqref{eq:stencil_1d} on a uniform mesh to solve
\eqref{eq:1D} numerically, and, at mesh points, the computed solution
will be quantitatively and qualitatively reasonable. However, layers
present in the true solution to \eqref{eq:1D} will not be resolved
and, consequently, the global error (e.g., between any polynomial
interpolant of the numerical solution and the true solution of \eqref{eq:1D})
will be $\mathcal{O}(1)$. The  simplest remedy for this is to use a
specially constructed \emph{layer-adapted mesh}~\cite{Linss10}. There
are numerous varieties of these, but they share the same basic
construction: in the layer region (whose location is determined
\emph{a priori} using qualitative analysis techniques), the mesh is
very fine, while elsewhere it is coarse and uniform.

We focus here on the much studied \emph{Shishkin mesh}~\cite{MiOR12}.
For \eqref{eq:1D},
subject to the assumptions in \eqref{eq:coef assumptions}, we define a
\emph{mesh transition point}
\begin{equation}
  \label{eq:tau 1D}    
  \tau = \min\left\{\frac{1}{2},\frac{2\eps}{\Clow}\ln(N)\right\}.
\end{equation}
Then the mesh is constructed by forming uniform grids with $N/2$
intervals on each of the subdomains $[0, \tau]$ and $[\tau,1]$; see
\autoref{fig:1D mesh}.
Then, if $u$ is the solution of~\eqref{eq:1D} evaluated at the mesh points, and $U^E$ denotes
the solution computed on this mesh using the upwind
scheme of \eqref{eq:stencil_1d}, it can be proven that
\begin{equation}
  \label{eq:error bound 1D Shishkin}
  \|u-U^E\|_\infty \leq C N^{-1} \ln N,
\end{equation}
where the constant $C$ is independent of $N$ and $\eps$; see, e.g.,
\cite[Thm. 3.39]{StSt18}.
\begin{figure}[htb]
  \begin{center}
    \begin{picture}(0,0)
\includegraphics{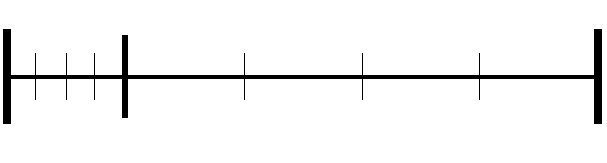}
\end{picture}
\setlength{\unitlength}{4144sp}
\begingroup\makeatletter\ifx\SetFigFont\undefined%
\gdef\SetFigFont#1#2#3#4#5{%
  \reset@font\fontsize{#1}{#2pt}%
  \fontfamily{#3}\fontseries{#4}\fontshape{#5}%
  \selectfont}%
\fi\endgroup%
\begin{picture}(4608,1227)(-53,185)
\put(  1,254){\makebox(0,0)[b]{\smash{{\SetFigFont{10}{12.0}{\familydefault}{\mddefault}{\updefault}{\color[rgb]{0,0,0}$0$}%
}}}}
\put(477,1289){\makebox(0,0)[b]{\smash{{\SetFigFont{10}{12.0}{\familydefault}{\mddefault}{\updefault}{\color[rgb]{0,0,0}$N/2$}%
}}}}
\put(901,254){\makebox(0,0)[b]{\smash{{\SetFigFont{10}{12.0}{\familydefault}{\mddefault}{\updefault}{\color[rgb]{0,0,0}$\tau$}%
}}}}
\put(4501,254){\makebox(0,0)[b]{\smash{{\SetFigFont{10}{12.0}{\familydefault}{\mddefault}{\updefault}{\color[rgb]{0,0,0}$1$}%
}}}}
\put(2746,1289){\makebox(0,0)[b]{\smash{{\SetFigFont{10}{12.0}{\familydefault}{\mddefault}{\updefault}{\color[rgb]{0,0,0}$N/2$}%
}}}}
\end{picture}%

    \caption{Sketch of a Shishkin mesh for the one-dimensional problem~\eqref{eq:1D}}
    \label{fig:1D mesh}
  \end{center}
\end{figure}

\begin{remark}\label{rem:general S mesh}
  \fix{Lin{\ss}~\cite[p10]{Linss10} provides a more general construction of
  a Shishkin mesh  for this problem, introducing a parameter $q$ that
  determines the proportion of mesh points in the layer. Specifically,
  taking $q$ so that $qN$ is an integer,
  one modifies \eqref{eq:tau 1D} so that
$\tau = \min\left\{q, 2\eps/\Clow \ln(N)\right\}$.
Then, one forms uniform grids on
  the subdomains
  $[0,\tau]$ and $[\tau,1]$ so that they have $qN$ and $(1-q)N$
  intervals, respectively. In
  Section~\ref{sec:1D numbers}, we take the standard choice of $q=1/2$; however, in 
  the following section,  we develop the preconditioner for a mesh
  with arbitrarily many mesh points in the layer and interior regions.}
\end{remark}

\subsection{The preconditioner}
\label{sec:1D M}

Consider the one-dimensional problem in \eqref{eq:1D}, discretized
to give the stencil in \eqref{eq:stencil_1d} on a Shishkin mesh.
Within the layer, $h_i \approx \eps/N$ (discarding the $\ln(N)$ factor for simplicity of presentation), so the stencil can be
approximated by
\[
\left[-\frac{N^2}{\eps}, 2\frac{N^2}{\eps} + \frac{c_iN}{\eps} + r_i,
  -\frac{N^2}{\eps} - \frac{c_iN}{\eps}\right],
\]
and we see that both the left- and right-side connections are
significant, with the diffusion terms dominating in the limit of large
N (regardless of the value of $\eps$).  In contrast, in the interior
region, $h_i \approx 1/N$, so the stencil is approximately
\[
\left[-\eps N^2, 2\eps N^2 + c_iN + r_i, -\eps N^2 - c_i N \right].
\]
When $\eps N \ll 1$, the $\Oh(1)$ convection coefficient dominates the
diffusion coefficient, resulting in a system that is dominated by its
upper bidiagonal part, with the entry on the subdiagonal being
comparatively negligible.

Motivated by the above, we consider a block partitioning of the system
matrix, $A$, and associated discrete vectors,
into regions where the meshwidths are $\Oh(\eps/N)$ (typically near
layers, so denoted by $L$), and those where the 
meshwidths are $\Oh(1/N)$  (typically in the domain's interior, so
denoted by $I$). We include the transition point
in the mesh (with an interval
with $\Oh(\eps/N)$ meshwidth to its left and $\Oh(1/N)$ to its right)
in $L$, the layer set.
In this notation, 
\begin{equation}\label{eq:block_A}
A = \begin{bmatrix} A_{LL} & A_{LI} \\ A_{IL} &
  A_{II} \end{bmatrix},
\end{equation}
where $A$ is an \fix{$N\times N$} matrix,
$A_{LL}$ is an $N_L\times N_L$ matrix, and $A_{II}$ being an
$N_I\times N_I$ matrix, with $N = N_L + N_I$.  From above, we see that
we can accurately \fix{approximate} the action of $A_{II}$ by its upper
triangular part (including the diagonal), $M_{II}$, leading to a
block-structured preconditioner for $A$, given as
\begin{equation}
  \label{eq:block_M}
  M = \begin{bmatrix} A_{LL} & A_{LI} \\ A_{IL} &
    M_{II} \end{bmatrix}.
\end{equation}
This can be viewed in several ways, including as a type of Schwarz
iteration where we are, simply, using an inexact subdomain solve on
the interior region of the mesh (noting that $A_{IL}$ has only a
single nonzero entry, in its first row and last column).

\subsection{Theory}\label{sec:1D theory}
Schwarz methods for this problem
have been considered before, in~\cite{MaOR02,CEcheverria_etal_2018a}.
For one-dimensional problems, with tridiagonal discretization
matrices, the spectral structure of the error-propagation operators
often has very tractable form; however, this may limit the
applicability of the resulting preconditioners to one-dimensional
problems, if the resulting methods rely on this special structure.
As we show below, the intuition behind the
block-structured preconditioner in \eqref{eq:block_M} generalizes more
readily.  The analysis of the eigenvalues of the preconditioned
system, however, is somewhat more involved.  We begin by
characterizing the eigenvalues of $M^{-1}A$ based on the matrix
structure, using the notation $\vec{e}^{(k)}$ for the canonical unit
vector of length $N$, with all entries equal to zero except the
$k^\text{th}$, which is equal to one.

\begin{theorem}\label{thm:block_fact}
Let $S_A = A_{II} - A_{IL}A_{LL}^{-1}A_{LI}$ and $S_M = M_{II} -
A_{IL}A_{LL}^{-1}A_{LI}$.  Then $M^{-1}A$ has (at least) $N_L$
eigenvalues equal to 1, with eigenvectors $\vec{e}^{(k)}$ for $1\leq k
\leq N_L$.  The other $N_I$ eigenvalues are the eigenvalues of
$S_M^{-1}S_A$.
\end{theorem}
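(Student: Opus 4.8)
The plan is to exploit the fact that $A$ and $M$ differ only in their lower-right block, so that the block (Schur-complement) factorizations of $A$ and $M$ share a common block-lower-triangular factor. Setting $L_0 = \begin{bmatrix} I & 0 \\ A_{IL}A_{LL}^{-1} & I\end{bmatrix}$, $U_A = \begin{bmatrix} A_{LL} & A_{LI} \\ 0 & S_A\end{bmatrix}$, and $U_M = \begin{bmatrix} A_{LL} & A_{LI} \\ 0 & S_M\end{bmatrix}$ with $S_A$, $S_M$ as in the statement, a direct block multiplication shows $A = L_0 U_A$ and $M = L_0 U_M$. These factorizations are valid because $A_{LL}$, a principal submatrix of the M-matrix $A$, is nonsingular. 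The crucial consequence is that the common factor cancels: $M^{-1}A = U_M^{-1} L_0^{-1} L_0 U_A = U_M^{-1} U_A$.

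The second step is to compute this product explicitly. Using the block-triangular inverse $U_M^{-1} = \begin{bmatrix} A_{LL}^{-1} & -A_{LL}^{-1}A_{LI}S_M^{-1} \\ 0 & S_M^{-1}\end{bmatrix}$, I would multiply out to obtain
\[
M^{-1}A = \begin{bmatrix} I & A_{LL}^{-1}A_{LI}\bigl(I - S_M^{-1}S_A\bigr) \\ 0 & S_M^{-1}S_A\end{bmatrix}.
\]
Since $M^{-1}A$ is now exhibited in block upper-triangular form, its spectrum is the union of the spectrum of the $N_L\times N_L$ identity block --- contributing $N_L$ eigenvalues equal to $1$ --- and the spectrum of $S_M^{-1}S_A$, which accounts for the remaining $N_I$ eigenvalues. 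This gives both eigenvalue claims simultaneously (and makes clear why the multiplicity of $1$ is ``at least'' $N_L$: $S_M^{-1}S_A$ may itself have $1$ in its spectrum).

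For the eigenvector statement, rather than transporting eigenvectors through the above identity I would argue directly: $A - M = \begin{bmatrix} 0 & 0 \\ 0 & A_{II}-M_{II}\end{bmatrix}$ has its first $N_L$ columns equal to zero, so $(A-M)\vec{e}^{(k)} = \vec{0}$ and hence $A\vec{e}^{(k)} = M\vec{e}^{(k)}$ for $1\le k\le N_L$; applying $M^{-1}$ gives $M^{-1}A\,\vec{e}^{(k)} = \vec{e}^{(k)}$. This is consistent with the block-triangular form above, in which every vector supported on its first $N_L$ coordinates is fixed.

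I do not anticipate a genuine obstacle here: once the shared-factor observation is made, the remainder is bookkeeping. The only points needing a sentence of care are the invertibility hypotheses implicit in the statement --- nonsingularity of $A_{LL}$ (a principal submatrix of an M-matrix), of $M_{II}$ (triangular, with the strictly positive diagonal of $A_{II}$ coming from the upwind stencil~\eqref{eq:stencil_1d}), and of $S_M$, which is precisely what is needed for $M$, and therefore $M^{-1}A$, to be well defined. I would record these at the outset of the proof.
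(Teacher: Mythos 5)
Your proposal is correct and follows essentially the same route as the paper: both exploit that $A$ and $M$ admit block factorizations sharing all factors except the Schur complement ($S_A$ versus $S_M$), cancel the common factor in $M^{-1}A$, read the spectrum off the resulting block-triangular (in the paper, block-diagonal up to similarity) form, and verify the eigenvectors via $A\vec{e}^{(k)} = M\vec{e}^{(k)}$ for $1 \le k \le N_L$. The only cosmetic difference is that the paper uses the three-factor $LDU$-type factorization and a similarity argument, whereas you cancel the common block-lower-triangular factor and multiply out $U_M^{-1}U_A$ explicitly; the content is the same.
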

\begin{proof}
  From the block structure, we can block factorize both $A$ and $M$
  as
  \begin{align*}
    A & = \begin{bmatrix} I
      & 0 \\ A_{IL}A_{LL}^{-1} & I \end{bmatrix}
    \begin{bmatrix} A_{LL} & 0 \\ 0 & S_A \end{bmatrix}
    \begin{bmatrix} I & A_{LL}^{-1}A_{LI} \\ 0 & I \end{bmatrix}, \\
    M & = \begin{bmatrix} I & 0 \\ A_{IL}A_{LL}^{-1} & I \end{bmatrix}
    \begin{bmatrix} A_{LL} & 0 \\ 0 & S_M \end{bmatrix}
    \begin{bmatrix} I & A_{LL}^{-1}A_{LI} \\ 0 & I \end{bmatrix},
  \end{align*}
  where $I$ is understood to be the suitably sized identity matrix.
  By direct calculation, we then have
  \[
M^{-1}A = \begin{bmatrix} I & A_{LL}^{-1}A_{LI} \\ 0 &  I \end{bmatrix}^{-1}
\begin{bmatrix} I & 0 \\ 0 & S_M^{-1}S_A \end{bmatrix}
\begin{bmatrix} I & A_{LL}^{-1}A_{LI} \\ 0 & I \end{bmatrix},
\]
which we recognize as a similarity transformation of the
block-diagonal matrix
\[
\begin{bmatrix} I & 0 \\ 0 & S_M^{-1}S_A \end{bmatrix}.
\]
Thus, $M^{-1}A$ has the same eigenvalues as this matrix, giving (at
least) $N_L$ eigenvalues equal to one, and the remaining eigenvalues
as those of $S_M^{-1}S_A$.  That the unit eigenvalues have
eigenvectors $\vec{e}^{(k)}$ for $1\leq k \leq N_L$ follows from the
fact that $A\vec{e}^{(k)} = M\vec{e}^{(k)}$ for $1\leq k \leq N_L$.
\end{proof}

Computing the eigenvalues of $S_M^{-1}S_A$ is the harder task.  To do
this, we first explicitly compute $A_{IL}A_{LL}^{-1}A_{LI}$, exploiting
the fact that both $A_{IL}$ and $A_{LI}$ have only a single nonzero entry, due
to the tridiagonal structure of $A$.
Noting that, from the block
structure,
\begin{align*}
\left(A_{LI}\right)_{i,j} & = a_{i,N_L+j} \text{ for }1\leq i \leq
N_L, 1\leq j \leq N_I, \\
\left(A_{IL}\right)_{i,j} & = a_{N_L+i,j} \text{ for }1\leq i \leq
N_I, 1\leq j \leq N_L,
\end{align*}
we can recognize that only $\left(A_{LI}\right)_{N_L,1}$ and
$\left(A_{IL}\right)_{1,N_L}$ are nonzero, and we can write
\begin{align*}
A_{LI} & = a^{}_{N_L,N_L+1}\hat{\vec{e}}^{(N_L)}\left(\hat{\vec{e}}^{(1)}\right)^T, \\
A_{IL} & = a^{}_{N_L+1,N_L}\hat{\vec{e}}^{(1)}\left(\hat{\vec{e}}^{(N_L)}\right)^T, 
\end{align*}
where $\hat{\vec{e}}^{(1)}$ denotes the first canonical unit vector of
length $N_I$ and $\hat{\vec{e}}^{(N_L)}$ denotes the last canonical
unit vector of length $N_L$.  Then, from direct calculation, we have
\begin{align*}
A_{IL}A_{LL}^{-1}A_{LI} & =
\left(a^{}_{N_L+1,N_L}\hat{\vec{e}}^{(1)}\left(\hat{\vec{e}}^{(N_L)}\right)^T\right)
A_{LL}^{-1}
\left(a^{}_{N_L,N_L+1}\hat{\vec{e}}^{(N_L)}\left(\hat{\vec{e}}^{(1)}\right)^T\right) \\
& = a^{}_{N_L+1,N_L}a^{}_{N_L,N_L+1}\left(A_{LL}^{-1}\right)_{N_L,N_L}\hat{\vec{e}}^{(1)}\left(\hat{\vec{e}}^{(1)}\right)^T.
\end{align*}
Two general results now enable us to estimate how large of a change this term
represents in $S_M$ and $S_A$.

\begin{lemma}\label{lem:LU_nn}
Let $A=LU$ be the LU factorization of the $n\times n$ matrix, $A$,
with unit diagonal on $L$.  Then $\left(A^{-1}\right)_{n,n} = \left(u_{n,n}\right)^{-1}$.
\end{lemma}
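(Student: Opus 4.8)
The plan is to exploit the triangular structure of the factors directly, reducing the $(n,n)$ entry of $A^{-1}$ to a single scalar relation. First I would write $A^{-1} = U^{-1}L^{-1}$, so that $\left(A^{-1}\right)_{n,n} = \left(U^{-1}L^{-1}\right)_{n,n} = \sum_{k} \left(U^{-1}\right)_{n,k}\left(L^{-1}\right)_{k,n}$. Since $U$ is upper triangular, $U^{-1}$ is upper triangular as well, so $\left(U^{-1}\right)_{n,k} = 0$ unless $k = n$; and the diagonal entry of the inverse of a triangular matrix is the reciprocal of the corresponding diagonal entry, so $\left(U^{-1}\right)_{n,n} = \left(u_{n,n}\right)^{-1}$. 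Similarly, $L$ is lower triangular with unit diagonal, so $L^{-1}$ is lower triangular with unit diagonal, giving $\left(L^{-1}\right)_{n,n} = 1$. Combining these, the sum collapses to the single term $\left(U^{-1}\right)_{n,n}\left(L^{-1}\right)_{n,n} = \left(u_{n,n}\right)^{-1}$.

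An equivalent, perhaps cleaner, route is to avoid forming $L^{-1}$ and $U^{-1}$ explicitly: apply $A^{-T}$ to the last canonical unit vector $\vec{e}^{(n)}$ and take its $n$-th component, i.e.\ $\left(A^{-1}\right)_{n,n} = \left(\vec{e}^{(n)}\right)^T A^{-1} \vec{e}^{(n)}$. Writing $\vec{x} = A^{-1}\vec{e}^{(n)}$ and solving $LU\vec{x} = \vec{e}^{(n)}$ by forward/back substitution, one sees from the forward solve $L\vec{y} = \vec{e}^{(n)}$ that $y_n = 1$ (the last row of $L$ acts on $\vec y$ as $y_n$ plus lower-index terms, and only the last entry of the right-hand side is nonzero—actually every earlier $y_i = 0$), and then the last step of the back solve $U\vec x = \vec y$ reads $u_{n,n} x_n = y_n = 1$, hence $x_n = \left(u_{n,n}\right)^{-1}$.

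Since this is essentially a one-line observation about triangular matrices, there is no real obstacle; the only thing to be careful about is justifying that the relevant off-diagonal contributions vanish, which is immediate from the triangular structure of $L$ and $U$ and the fact that the product of two upper (resp.\ lower) triangular matrices is upper (resp.\ lower) triangular with the product of diagonals on the diagonal. I would therefore present the short argument via $A^{-1} = U^{-1}L^{-1}$, noting along the way that the hypothesis that $A$ possesses an LU factorization already guarantees $u_{n,n}\neq 0$, so the expression is well defined.
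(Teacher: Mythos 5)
Your proof is correct, and your second route (solving $LU\vec{x}=\vec{e}^{(n)}$, noting the forward solve gives $L^{-1}\vec{e}^{(n)}=\vec{e}^{(n)}$ by the unit diagonal, then reading off $x_n=u_{n,n}^{-1}$ from the upper-triangular back solve) is exactly the argument the paper gives; your first route via $A^{-1}=U^{-1}L^{-1}$ is only a cosmetic reformulation of the same idea.
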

\begin{proof}
Note that $\left(A^{-1}\right)_{n,n}$ is naturally expressed as the
final entry in the vector $A^{-1}\vec{e}^{(n)}$.  From the LU
factorization, $A^{-1}\vec{e}^{(n)} = U^{-1}L^{-1}\vec{e}^{(n)}$.
Since $L$ is lower triangular with unit diagonal, $L^{-1}\vec{e}^{(n)}
= \vec{e}^{(n)}$, so $A^{-1}\vec{e}^{(n)} = U^{-1}\vec{e}^{(n)}$.
Now, since $U$ is upper-triangular, the last entry of
$U^{-1}\vec{e}^{(n)}$ is $u_{n,n}^{-1}$.
\end{proof}

\begin{lemma}\label{lem:diag_dom}
Let $A$ be a tridiagonal and diagonally dominant $n\times n$ matrix
with positive diagonal entries, and let $A=LU$ be its LU factorization
with unit diagonal on $L$.  Then $u_{i,i} \geq |u_{i,i+1}|$ for $1
\leq i \leq n-1$.
\end{lemma}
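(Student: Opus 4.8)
The plan is to prove Lemma~\ref{lem:diag_dom} by induction on $i$, tracking how the LU factorization propagates the diagonal-dominance property down the matrix. Write $A$ tridiagonal with subdiagonal entries $b_i$ (in row $i$, column $i-1$), diagonal entries $a_i > 0$, and superdiagonal entries $c_i$ (in row $i$, column $i+1$); diagonal dominance means $a_i \geq |b_i| + |c_i|$ for each $i$, with the usual conventions $b_1 = 0$, $c_n = 0$. The LU factorization with unit diagonal on $L$ has $U$ bidiagonal with $u_{i,i+1} = c_i$ (the superdiagonal of $U$ coincides with that of $A$, since $L$ only introduces fill below the diagonal), $u_{1,1} = a_1$, and the recurrence $u_{i,i} = a_i - \ell_{i,i-1} c_{i-1}$ where $\ell_{i,i-1} = b_i / u_{i-1,i-1}$, i.e.
\[
u_{i,i} = a_i - \frac{b_i c_{i-1}}{u_{i-1,i-1}}.
\]
So the claim $u_{i,i} \geq |u_{i,i+1}| = |c_i|$ is really a statement about this scalar recurrence.

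First I would establish the base case: $u_{1,1} = a_1 \geq |b_1| + |c_1| = |c_1|$ directly from diagonal dominance. Then for the inductive step, assume $u_{i-1,i-1} \geq |c_{i-1}|$ (and, implicitly, $u_{i-1,i-1} > 0$, which I would carry along as part of the induction hypothesis since it is needed to make the recurrence well-defined). From the recurrence,
\[
u_{i,i} = a_i - \frac{b_i c_{i-1}}{u_{i-1,i-1}} \geq a_i - \frac{|b_i|\,|c_{i-1}|}{u_{i-1,i-1}} \geq a_i - |b_i|,
\]
where the last inequality uses the induction hypothesis $|c_{i-1}|/u_{i-1,i-1} \leq 1$. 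Then diagonal dominance of row $i$ gives $a_i - |b_i| \geq |c_i| = |u_{i,i+1}|$, which is exactly the desired bound; and since $|c_i| \geq 0$, this also re-establishes positivity $u_{i,i} > 0$ (strictly, if one assumes strict diagonal dominance; for the weakly diagonally dominant case one should either assume irreducibility or simply note that the upwind matrix here is strictly diagonally dominant in the relevant rows, so $u_{i,i} > 0$ holds — I would insert a remark to that effect, or weaken the conclusion to $u_{i,i} \geq |u_{i,i+1}|$ with $\geq 0$ and handle the degenerate case separately).

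The main obstacle is the bookkeeping around positivity of the pivots $u_{i,i}$: the inequality chain above divides by $u_{i-1,i-1}$ and uses its sign, so the induction must carry $u_{i-1,i-1} > 0$ as an auxiliary hypothesis rather than just the headline inequality $u_{i-1,i-1} \geq |c_{i-1}|$. For the matrices arising from the upwind discretization \eqref{eq:stencil_1d}, strict diagonal dominance holds (the diagonal exceeds the sum of off-diagonal magnitudes by $r_i \geq 0$ plus a genuine positive surplus from the convection/diffusion balance at the boundary), so positivity is automatic; I would either state the lemma under the hypothesis of strict diagonal dominance or add one sentence noting that the weak case needs an extra condition. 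Apart from that, every step is a one-line estimate, so the proof is short: base case, recurrence, triangle inequality, induction hypothesis, diagonal dominance, done.
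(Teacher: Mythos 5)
Your proof is correct and follows essentially the same route as the paper's: the explicit bidiagonal recurrence $u_{i,i+1}=a_{i,i+1}$, $u_{i,i}=a_{i,i}-a_{i,i-1}u_{i-1,i}/u_{i-1,i-1}$, followed by induction with the triangle inequality and row-wise diagonal dominance. Your extra care about pivot positivity is reasonable but largely automatic, since the induction hypothesis $u_{i-1,i-1}\geq |u_{i-1,i}|\geq 0$ together with the assumed existence of the factorization (nonzero pivots for the division) already supplies the sign information the estimate needs.
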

\begin{proof}
  First consider the LU factorization of $A$, as
  \[
  A = \begin{bmatrix} 1 \\
    \ell_{2,1} & 1 \\
    & \ell_{3,2} & 1 \\
    & & \ddots & \ddots \\
    & & & \ell_{n,n-1} & 1 \end{bmatrix}
  \begin{bmatrix} u_{1,1} & u_{1,2} \\
     & u_{2,2} & u_{2,3} \\
    & & u_{3,3} & u_{3,4} \\
    & & & \ddots & \ddots \\
    & & & & u_{n,n} \end{bmatrix}.
  \]
From here, we can directly calculate that $u^{}_{1,1} = a^{}_{1,1}$,
$u^{}_{1,2} = a^{}_{1,2}$ and, for $i > 1$,
\begin{align*}
  u_{i,i+1} & = a_{i,i+1}, \\
  \ell_{i,i-1} & = a_{i,i-1}/u_{i-1,i-1}, \\
  u_{i,i} & = a_{i,i} - \ell_{i,i-1}u_{i-1,i} = a_{i,i} - a_{i,i-1}\frac{u_{i-1,i}}{u_{i-1,i-1}}.
\end{align*}

Now, consider a proof of the theorem by induction.  For the base case,
we have $u_{1,1} \geq |u_{1,2}|$ from the original assumption on
diagonal dominance of $A$.  For the inductive step, assume
$u_{i-1,i-1} \geq |u_{i-1,i}|$.  Then,
\[
u_{i,i} \geq a_{i,i} -
|a_{i,i-1}|\left|\frac{u_{i-1,i}}{u_{i-1,i-1}}\right| \geq a_{i,i} -
|a_{i,i-1}| \geq |a_{i,i+1}|,
\]
where the last step follows by diagonal dominance of $A$.  Since
$u_{i,i+1} = a_{i,i+1}$, this completes the inductive step and the
proof.
\end{proof}

\begin{corollary}
  \label{cor:bound}
Let $A$ be the $N\times N$ discretization matrix of
\eqref{eq:1D}, as given in \eqref{eq:stencil_1d}, partitioned as
in \eqref{eq:block_A}.  Then,
\[
\left(A_{IL}A_{LL}^{-1}A_{LI}\right)_{1,1} \leq |a^{}_{N_L+1,N_L}| = \left(A_{IL}\right)_{1,N_L}.
\]
\end{corollary}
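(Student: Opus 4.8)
The plan is to combine the explicit rank-one representation of $A_{IL}A_{LL}^{-1}A_{LI}$ computed just above with Lemmas~\ref{lem:LU_nn} and~\ref{lem:diag_dom}. From that representation, $\left(A_{IL}A_{LL}^{-1}A_{LI}\right)_{1,1} = a^{}_{N_L+1,N_L}\,a^{}_{N_L,N_L+1}\,\left(A_{LL}^{-1}\right)_{N_L,N_L}$. Since $A$ is an M-matrix (Section~\ref{sec:discretization}), its off-diagonal entries are negative, so the product $a^{}_{N_L+1,N_L}\,a^{}_{N_L,N_L+1}$ equals $|a^{}_{N_L+1,N_L}|\,|a^{}_{N_L,N_L+1}|$ and is positive; it therefore suffices to bound $\left(A_{LL}^{-1}\right)_{N_L,N_L}$ from above, equivalently to bound the last pivot $u_{N_L,N_L}$ of the LU factorization of $A_{LL}$ from below by $|a^{}_{N_L,N_L+1}|$.

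First I would check that $A_{LL}$ inherits the hypotheses of the two lemmas: it is tridiagonal with positive diagonal (again because $A$ is an M-matrix), and it is diagonally dominant, because its leading $N_L-1$ rows coincide with those of $A$ while its last row has even more slack, having discarded the coupling $a^{}_{N_L,N_L+1}$ to the interior block. Lemma~\ref{lem:LU_nn} applied with $n=N_L$ then gives $\left(A_{LL}^{-1}\right)_{N_L,N_L}=u_{N_L,N_L}^{-1}$, while Lemma~\ref{lem:diag_dom} applied to $A_{LL}$ gives $u_{i,i}\ge|u_{i,i+1}|$ for $1\le i\le N_L-1$, and in particular $|u_{N_L-1,N_L}/u_{N_L-1,N_L-1}|\le 1$.

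Next I would run the LU recurrence from the proof of Lemma~\ref{lem:diag_dom} one step further, at $i=N_L$: $u_{N_L,N_L}=a^{}_{N_L,N_L}-a^{}_{N_L,N_L-1}\,u_{N_L-1,N_L}/u_{N_L-1,N_L-1}\ge a^{}_{N_L,N_L}-|a^{}_{N_L,N_L-1}|$. The crucial point is that this last estimate must be closed using diagonal dominance of row $N_L$ of the \emph{full} matrix $A$, not merely of $A_{LL}$: $a^{}_{N_L,N_L}\ge|a^{}_{N_L,N_L-1}|+|a^{}_{N_L,N_L+1}|$, whence $u_{N_L,N_L}\ge|a^{}_{N_L,N_L+1}|>0$. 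Dividing, $\left(A_{IL}A_{LL}^{-1}A_{LI}\right)_{1,1}=|a^{}_{N_L+1,N_L}|\,|a^{}_{N_L,N_L+1}|/u_{N_L,N_L}\le|a^{}_{N_L+1,N_L}|$, and the concluding equality in the statement is just the identification of this bound with the single nonzero entry of $A_{IL}$ noted above.

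The only real obstacle is the bookkeeping around the last row of $A_{LL}$: Lemma~\ref{lem:diag_dom} as stated controls only the pivots $u_{i,i}$ with $i\le N_L-1$, so the final pivot needs the extra recurrence step, and it is exactly there that one must appeal to diagonal dominance of $A$ rather than of $A_{LL}$ in order to recover the factor $|a^{}_{N_L,N_L+1}|$ that cancels the matching factor in the numerator. Everything else is direct substitution.
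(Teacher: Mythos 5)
Your proposal is correct and follows essentially the same route as the paper: the rank-one formula for $A_{IL}A_{LL}^{-1}A_{LI}$, Lemma~\ref{lem:LU_nn} to identify $\left(A_{LL}^{-1}\right)_{N_L,N_L}$ with $u_{N_L,N_L}^{-1}$, one extra step of the LU recurrence from Lemma~\ref{lem:diag_dom} for the final pivot, and diagonal dominance of row $N_L$ of the full matrix (via the stencil) to get $u_{N_L,N_L}\ge|a_{N_L,N_L+1}|$. Your added bookkeeping (sign of the off-diagonal product, verifying $A_{LL}$ satisfies the lemmas' hypotheses) only makes explicit what the paper leaves implicit.
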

\begin{proof}
  From above, we have that
  \[
  \left(A_{IL}A_{LL}^{-1}A_{LI}\right)_{1,1} = a_{N_L+1,N_L}a_{N_L,N_L+1}\left(A_{LL}^{-1}\right)_{N_L,N_L}.
  \]
  Lemma~\ref{lem:LU_nn} shows that if $A_{LL} = LU$ is the LU
  factorization of $A_{LL}$ (with unit diagonal on $L$), then
  $\left(A_{LL}^{-1}\right)_{N_L,N_L} =
  \left(u_{N_L,N_L}\right)^{-1}$, and
  \begin{equation}\label{eq:bound_schur}
  \left(A_{IL}A_{LL}^{-1}A_{LI}\right)_{1,1} = a_{N_L+1,N_L}\frac{a_{N_L,N_L+1}}{u_{N_L,N_L}}.
  \end{equation}

  Extending the induction argument from Lemma~\ref{lem:diag_dom}, we
  have that
  \[
u_{N_L,N_L} = a_{N_L,N_L} -
a_{N_L,N_L-1}\frac{u_{N_L-1,N_L}}{u_{N_L,N_L}} \geq a_{N_L,N_L} -
|a_{N_L,N_L-1}|.
\]
From the definition of the stencil in \eqref{eq:stencil_1d}, this
gives $u_{N_L,N_L} \geq |a_{N_L,N_L+1}|$.  Using this to bound the
right-hand side of \eqref{eq:bound_schur} gives the stated result.
\end{proof}

We are now ready to state and prove our main result, on the
eigenvalues of the preconditioned system.

\begin{theorem}
  \label{thm:big_bound}
Let $A$ be the $N\times N$ tridiagonal matrix given by the stencil in
\eqref{eq:stencil_1d}, block partitioned as in \eqref{eq:block_A} and
let $M$ be the preconditioner defined in \eqref{eq:block_M}.  Assume
that there is a constant, $\alpha$, such that the discretization mesh
satisfies $\alpha/N \leq h_i$ for $N_L+1 \leq i \leq N$.
Then, any eigenvalue, $\lambda$, of
$M^{-1}A$ satisfies
\begin{equation}
  \label{eq:big_bound}
1-\frac{8\eps N}{\Clow\alpha} \leq \lambda \leq 1,
\end{equation}
where $\Clow > 0$ is defined in~\eqref{eq:coef assumptions}.
\end{theorem}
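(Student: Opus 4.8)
The plan is to leverage Theorem~\ref{thm:block_fact}, which reduces the problem to bounding the eigenvalues of $S_M^{-1}S_A$, where $S_A = A_{II} - A_{IL}A_{LL}^{-1}A_{LI}$ and $S_M = M_{II} - A_{IL}A_{LL}^{-1}A_{LI}$. The crucial observation is that these two Schur complements differ only in a \emph{single entry} in the top-left position: since $M_{II}$ is the upper-triangular part of $A_{II}$ (including the diagonal), we have $S_A - S_M = A_{II} - M_{II} = L_{II}$, the strictly lower-triangular (here, subdiagonal) part of $A_{II}$, and moreover $A_{IL}A_{LL}^{-1}A_{LI}$ is a rank-one matrix supported entirely in the $(1,1)$ entry, as computed just before Corollary~\ref{cor:bound}. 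So I would first write $S_A = S_M + L_{II}$, hence $S_M^{-1}S_A = I + S_M^{-1}L_{II}$, which immediately shows every eigenvalue of $M^{-1}A$ has the form $\lambda = 1 + \mu$ where $\mu$ is an eigenvalue of $S_M^{-1}L_{II}$. This already gives the upper bound $\lambda \le 1$ once we argue $S_M^{-1}L_{II}$ has nonpositive spectrum (see below); the real work is the lower bound.

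Next I would pin down the structure of $S_M^{-1}L_{II}$. The matrix $L_{II}$ is lower bidiagonal with zero diagonal, whose $(i,i-1)$ entries are the subdiagonal stencil coefficients $a_{N_L+i,N_L+i-1} = -\eps/(h_j\bar h_j)$ (with $j = N_L+i$), all negative. The matrix $S_M$ is upper triangular: it equals $M_{II}$ except for the single corrected entry in position $(1,1)$, namely $(S_M)_{1,1} = a_{N_L+1,N_L+1} - (A_{IL}A_{LL}^{-1}A_{LI})_{1,1}$. Using Corollary~\ref{cor:bound}, the subtracted quantity is bounded by $|a_{N_L+1,N_L}|$, the magnitude of the subdiagonal stencil entry at meshpoint $N_L+1$; combined with the diagonal dominance of $A$ this shows $(S_M)_{1,1}$ stays positive and, in fact, bounded below by the magnitude of its own superdiagonal entry, so $S_M$ remains upper-triangular with positive diagonal and is (weakly) diagonally dominant by columns or rows as needed. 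Because $S_M$ is triangular, its eigenvalues are its diagonal entries, and I can get explicit control: away from the first row, $(S_M)_{i,i} = a_{N_L+i,N_L+i,i}$, the full diagonal stencil coefficient, which is at least $c_j/h_{j+1} \ge \Clow/h_{j+1} \ge \Clow N$ by the mesh hypothesis $h_i \ge \alpha/N$ — wait, more carefully, at least $\Clow N$ up to the $\alpha$ factor. The subdiagonal entries of $L_{II}$, meanwhile, have magnitude $\eps/(h_j \bar h_j) \le \eps N^2/\alpha^2$ roughly; but I must be careful because the ratio $\eps N^2 / (\Clow N) = \eps N/\Clow$ is what should appear, so I would estimate $|(L_{II})_{i,i-1}|/(S_M)_{i,i}$-type quantities rather than each factor in isolation, using $h_j \ge \alpha/N$ in the denominator of the $L_{II}$ entry once and the convection term $c_j/h_{j+1}$ in $(S_M)_{i,i}$.

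For the lower bound on $\mu$, the clean route is a Gershgorin-type argument on a similarity transform, or directly on $S_M^{-1}L_{II}$. Since $S_M$ is upper triangular, $S_M^{-1}$ is upper triangular, so $S_M^{-1}L_{II}$ is a full matrix in general; rather than invert, I would instead work with the generalized eigenvalue problem $L_{II}\vec v = \mu S_M \vec v$, i.e. $(S_M - \tfrac1\mu \, (-L_{II}))$ — hmm, cleaner: $\lambda S_M \vec v = S_A \vec v = (S_M + L_{II})\vec v$, so $(\lambda-1)S_M\vec v = L_{II}\vec v$. Take the component $k$ where $|v_k|$ is maximal; then $|\lambda - 1|\,|(S_M)_{k,k} v_k + (S_M)_{k,k+1}v_{k+1}| = |(L_{II})_{k,k-1} v_{k-1}| \le |(L_{II})_{k,k-1}|\,|v_k|$. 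Here I use that $S_M$ has only diagonal and superdiagonal entries and $L_{II}$ only the one subdiagonal entry per row. Bounding $|(S_M)_{k,k}v_k + (S_M)_{k,k+1}v_{k+1}| \ge (S_M)_{k,k}|v_k| - |(S_M)_{k,k+1}||v_k| \ge ((S_M)_{k,k} - |(S_M)_{k,k+1}|)|v_k|$ using the maximality of $|v_k|$, and noting $(S_M)_{k,k} - |(S_M)_{k,k+1}| \ge$ (the reaction term plus net convection contribution) $\ge c_j/h_{j+1} + r_j \ge \Clow N/\,$(const), while $|(L_{II})_{k,k-1}| = \eps/(h_j\bar h_j) \le 2\eps N^2/\alpha^2$, gives $|\lambda - 1| \le \frac{2\eps N^2/\alpha^2}{\Clow N/\alpha} = \frac{2\eps N}{\Clow \alpha}$ — and chasing the constants honestly (the $\bar h_j$ can be as small as $h_j/2$-ish at the transition, and one has to be slightly generous) yields the stated $8\eps N/(\Clow\alpha)$. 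The sign of $\lambda - 1$ is controlled the same way: $(\lambda-1)$ has the sign making $(S_M + L_{II})$ lose diagonal dominance, and since $(L_{II})_{k,k-1} < 0$ while $(S_M)_{k,k} > 0$, one gets $\lambda \le 1$. The main obstacle is exactly this constant-tracking near the mesh transition point, where $\bar h_i$ mixes a fine ($\Oh(\eps/N)$) and a coarse ($\Oh(1/N)$) interval and the clean ``$h_i \ge \alpha/N$'' hypothesis must be used in just the right place; getting a bound that is genuinely uniform in $\eps$ and $N$ (so that $\eps N \to 0$ forces $\lambda \to 1$) requires being careful that no stray factor of $1/\eps$ survives, which is why the convection term $c_j/h_{j+1}$, not the diffusion term, must be the one retained as the dominant part of $(S_M)_{k,k}$ in the interior regime.
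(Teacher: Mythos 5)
Your reduction via Theorem~\ref{thm:block_fact} and the splitting $S_A=S_M+L_{II}$ (equivalently the paper's $S_A=S_M-\hat L$) matches the paper's starting point, and your treatment of the $(1,1)$ correction via Corollary~\ref{cor:bound} is consistent with it. But the estimate on which your whole bound rests is false. For the upwind stencil \eqref{eq:stencil_1d}, with $j=N_L+k$, the superdiagonal entry of $S_M$ in row $k$ is $-\eps/(\bar h_j h_{j+1})-c_j/h_{j+1}$, so the convection term appears with exactly the same magnitude in the diagonal and in the superdiagonal and cancels: $(S_M)_{k,k}-|(S_M)_{k,k+1}|=\eps/(\bar h_j h_j)+r_j$, not $c_j/h_{j+1}+r_j$. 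There is no ``net convection contribution,'' and since $r_j$ may vanish, your maximum-component inequality only yields $|\lambda-1|\le \bigl(\eps/(\bar h_j h_j)\bigr)/\bigl(\eps/(\bar h_j h_j)+r_j\bigr)\le 1$, which does not shrink as $\eps N\to 0$ and is nowhere near $8\eps N/(\Clow\alpha)$. The failure is structural, not a matter of constant-chasing near the transition point: in the interior region the rightward (superdiagonal) coupling ratio is $\Oh(1)$ while only the leftward (subdiagonal) ratio is $\Oh(\eps N)$, so any row-wise Gershgorin or maximum-principle argument applied to the pencil $(\lambda-1)S_M\vec v=L_{II}\vec v$ sees the $\Oh(1)$ coupling and cannot deliver a small bound.

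The paper circumvents exactly this by using the tridiagonal (2-cyclic) structure of $S_A$: $S_M^{-1}\hat L$ is the reverse-ordered Gauss--Seidel error propagator, so by classical theory \cite{RSVarga_2000} its nonzero eigenvalues are squares of eigenvalues of the Jacobi iteration matrix; a diagonal similarity $\Sigma$ symmetrizes that Jacobi matrix, and Gershgorin then bounds its eigenvalues by $2\max_k\sqrt{b_{-k}b_k}$ --- the \emph{geometric mean} of the two coupling ratios --- so the smallness of $b_{-k}\le 2\eps N/(\Clow\alpha)$ alone (with $b_k\le 1$, using Corollary~\ref{cor:bound} for $b_1$) gives $\gamma=\mu^2\le 8\eps N/(\Clow\alpha)$. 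The same device is what yields realness and nonnegativity of $\gamma$, hence the upper bound $\lambda\le 1$; your sign argument (``the sign making $S_M+L_{II}$ lose diagonal dominance'') does not establish this, since you have not shown the eigenvalues are even real. To repair your route you would need to weight the maximum principle by precisely such a diagonal scaling so that the product $b_{-k}b_k$, rather than the individual row sums, controls the bound --- at which point you have essentially reconstructed the paper's proof.
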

\begin{proof}
From Theorem~\ref{thm:block_fact}, we have that all of the eigenvalues
of $M^{-1}A$ are either $1$ (and trivially satisfy the bound) or are
eigenvalues of $S_M^{-1}S_A$.  Note that we can write $S_A = S_M -
\hat{L}$, where $\hat{L}$ is a strictly lower-triangular matrix, so
that
\[
S_M^{-1}S_A = S_M^{-1}\left(S_M - \hat{L}\right)= I - S_M^{-1}\hat{L}.
\]
Consequently, any eigenvalue, $\lambda$, of $S_M^{-1}S_A$ can be
written as $\lambda = 1-\gamma$, where $\gamma$ is an eigenvalue of
$S_M^{-1}\hat{L}$.  Note, also, that this preserves algebraic
and geometric multiplicities of the eigenvalues.

Now, we recognize $S_M^{-1}\hat{L}$ as the iteration matrix for a
reverse-ordered Gauss-Seidel iteration on $S_A$, and that since $S_A$
is tridiagonal, it is a 2-cyclic matrix.  Thus, by classical arguments
\cite{RSVarga_2000}, we have that any eigenvalue, $\gamma$, of
$S_M^{-1}\hat{L}$ is either zero or the square of an eigenvalue of the
corresponding Jacobi iteration matrix.  Here, we recognize that this
relationship does \textit{not} preserve geometric multiplicities, so
that $\gamma = 0$ may correspond to an eigenvalue whose algebraic
multiplicity may be larger than its geometric multiplicity.  Writing
$D_A$ as the diagonal matrix whose entries match those of $S_A$, we
can naturally write
\[
I - D_A^{-1}S_A = \begin{bmatrix}
    0 & b_1 \\
    b_{-1} & 0 & b_2 \\
    & b_{-2} & 0 & b_3 \\
    & & \ddots & \ddots & \ddots \\
    & & & b_{2-N_I} & 0 & b_{N_I-1} \\
    & & & & b_{1-N_I} & 0 \end{bmatrix},
\]
with natural definitions of $b_k$ and $b_{-k}$ for $1 \leq k \leq
N_I-1$, noting that $S_A$ differs from $A_{II}$ only in its first
entry.  We can define the diagonal scaling matrix $\Sigma$, by taking
$\sigma_{1,1} = 1$, then defining $\sigma_{k+1,k+1} =
\sqrt{b_k/b_{-k}}\sigma_{k,k}$ for $2\leq k \leq N_I-1$, noting that both
$b_k$ and $b_{-k}$ are positive.  By direct calculation, we then have
the similarity transform
\[
\Sigma\!\left(I - D_A^{-1}S_A\!\right)\!\Sigma^{-1} \! = \! \begin{bmatrix}
    0 & \sqrt{b_{-1}b_1} \\
    \sqrt{b_{-1}b_1} & 0 & \sqrt{b_{-2}b_2} \\
    & \sqrt{b_{-2}b_2} & 0 & \ddots \\
    & & \qquad \ddots &  \\
    & & \ddots & 0 & \sqrt{b_{1-N_I}b_{N_I-1}} \\
    & & & \sqrt{b_{1-N_I}b_{N_I-1}} & 0 \end{bmatrix}\!\! .
\]
This is symmetric, so Ger\v{s}gorin's Theorem implies that any
eigenvalue, $\mu$, of $I - D_A^{-1}S_A$ must be real and satisfy
  \[
|\mu| \leq 2\max_{1\leq k \leq N_I-1} \sqrt{b_{-k}b_k}.
\]

Considering the stencil in \eqref{eq:stencil_1d}, for $2\leq k \leq
N_I-1$, we have
\begin{align*}
  b_{-k} & = \frac{-a_{N_L+k+1,N_L+k}}{a_{N_L+k+1,N_L+k+1}} \\
  & = \frac{\frac{\eps}{\bar{h}_{N_L+k+1}h_{N_L+k+1}}}{\frac{\eps}{\bar{h}_{N_L+k+1}}\left(\frac{1}{h_{N_L+k+1}}+\frac{1}{h_{N_L+k+2}}\right) + \frac{c_{N_L+k+1}}{h_{N_L+k+2}} + r_{N_L+k+1}} \leq
    \frac{2\eps}{\Clow h_{N_L+k+1}} \leq \frac{2\eps N}{\Clow\alpha},\\
    b_k & = \frac{-a_{N_L+k,N_L+k+1}}{a_{N_L+k,N_L+k}} \\
    & = \frac{\frac{\eps}{\bar{h}_{N_L+k}h_{N_L+k+1}} +
      \frac{c_{N_L+k}}{h_{N_L+k+1}}}{\frac{\eps}{\bar{h}_{N_L+k}}\left(\frac{1}{h_{N_L+k}}+\frac{1}{h_{N_L+k+1}}\right)
      + \frac{c_{N_L+k}}{h_{N_L+k+1}} + r_{N_L+k}} \leq 1
\end{align*}
The same bound is true for $b_{-1}$.  For the remaining term, we use
the bound in Corollary~\ref{cor:bound} to get
\[
b_1 \leq
\frac{\frac{\eps}{\bar{h}_{N_L+1}h_{N_L+2}} +
      \frac{c_{N_L+1}}{h_{N_L+2}}}{\frac{\eps}{\bar{h}_{N_L+1}}\left(\frac{1}{h_{N_L+1}}+\frac{1}{h_{N_L+2}}\right)
  + \frac{c_{N_L+1}}{h_{N_L+2}} + r_{N_L+1} - \frac{\eps}{\bar{h}_{N_L+1}h_{N_L+1}}}
\leq 1.
\]
Taken together, these show that any eigenvalue, $\mu$, of the Jacobi
iteration matrix for $S_A$ satisfies the bound that
\[
|\mu| \leq 2\sqrt{\frac{2\eps N}{\Clow\alpha}}.
\]
Thus, the eigenvalues, $\gamma$, of $S_M^{-1}\hat{L}$ satisfy the
bound
\[
0 \leq \gamma \leq \frac{8\eps N}{\Clow\alpha}.
\]
Noting that the eigenvalues of $M^{-1}A$ are either 1 or $1-\gamma$
for an eigenvalue of $S_M^{-1}\hat{L}$ completes the proof.
\end{proof}

\fix{The sharpness of Theorem~\ref{thm:big_bound} is investigated
  further in Section~\ref{sec:1D numbers}.}
\begin{remark}
  \label{rem:not just S-meshes}
We note that Theorem~\ref{thm:big_bound} makes no assumptions on the
meshwidths in the layer region.  In fact, the theorem applies equally
well to a discretization on a uniform mesh (where it shows that
appropriately ordered Gauss-Seidel yields an effective stationary
iteration in the singularly perturbed limit).  Additionally, the
theorem covers both cases of piecewise uniform (Shishkin) or graded
(e.g., Bakhvalov) meshes.
\end{remark}

\begin{remark}
  \label{rem:small epsilon}
  The lower bound in \eqref{eq:big_bound} is useful only when $\eps$ is
  sufficiently small, relative to $N$, specifically, when
  \begin{equation}\label{eq:small epsilon*N}
    \eps N \leq \Clow \alpha/{8}.
  \end{equation}
  This is not a significant restriction; for larger
$\eps$, a uniform mesh is sufficient to resolve all aspects of the
solution. Furthermore, as per the discussion in
Section~\ref{sec:discretization}, if $\eps N$ is so large that
\eqref{eq:small epsilon*N} does not hold, a discretization using
central differences is stable on a uniform mesh, and specialized preconditioners are not needed.  
\end{remark}

\subsection{Numerical Experiments}\label{sec:1D numbers}

In this section, we verify the robustness of the bounds presented in
\autoref{thm:big_bound}, and then investigate the practical
issue of determining a suitable stopping criterion when GMRES is
preconditioned with $M$ as given in \eqref{eq:block_M}.

Our test problem is 
\begin{equation}
  \label{eq:1D example}
  -\eps u'' - \big(2+\sin(5x)\big) u' + u  = 4 e^{-x} \text{ on } (0,1),
  \quad u(0)=u(1)=0.
\end{equation}
A computed solution when $\eps=10^{-2}$ is shown in \autoref{fig:1D example}.
\begin{figure}[htb]
  \begin{center}
  \includegraphics[width=0.5\textwidth]{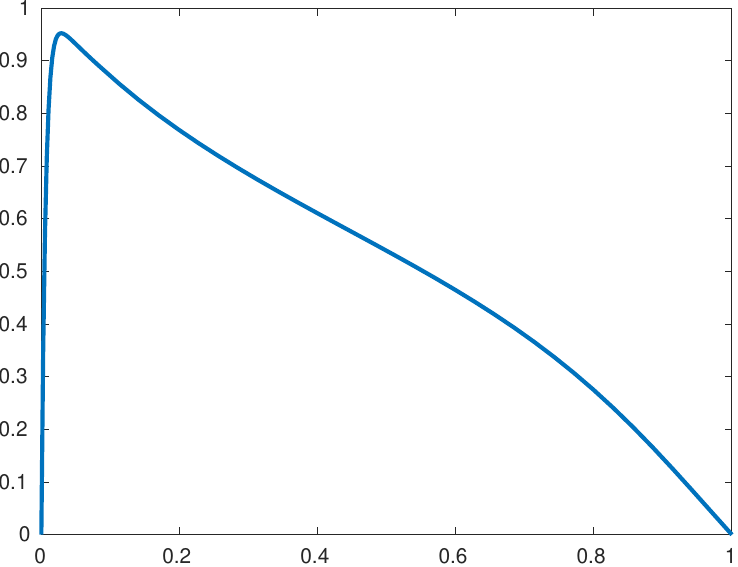}
  \caption{Solution to \eqref{eq:1D example} with $\eps=10^{-2}$}
  \label{fig:1D example}
  \end{center}
\end{figure}

In  \autoref{tab:1D errors}, we verify that the error bounds reported
in \eqref{eq:error bound 1D Shishkin}
are sharp: the upwind scheme
applied on the Shishkin mesh of Section~\ref{sec:1D S-mesh} yields a
solution with error that is  bounded
independently of $\eps$.
\fix{We also show $\rho^N$, the estimated rate of convergence for
  the smallest value of $\eps$. It is in agreement with
   \eqref{eq:error bound 1D Shishkin}: the error is proportional to $N^{-1}\ln N$.}

Since an analytical solution to
\eqref{eq:1D example} is not available, for each $N$,
these errors are estimated by comparing with a benchmark solution
computed on a mesh with the same transition points, but $64N$ mesh
intervals.

\begin{table}[htb]
  \begin{center}
  \caption{Error, in the discrete maximum norm, for \eqref{eq:1D
      example}.}
  \label{tab:1D errors}
\setlength{\tabcolsep}{3pt}
\begin{tabular}{c||c|c|c|c|c}
  \hline
  $\eps$   &  $N= 128$  &  $N= 256$  &  $N= 512$  &  $N=1024$  &  $N=2048$  \\ \hline

$1$ & $2.425\times 10^{-3}$ & $1.220\times 10^{-3}$ & $6.120\times 10^{-4}$ & $3.065\times 10^{-4}$ & $1.534\times 10^{-4}$ \\ 
$10^{-1}$ & $2.725\times 10^{-2}$ & $1.409\times 10^{-2}$ & $7.173\times 10^{-3}$ & $3.619\times 10^{-3}$ & $1.818\times 10^{-3}$ \\ 
$10^{-2}$ & $4.963\times 10^{-2}$ & $3.007\times 10^{-2}$ & $1.742\times 10^{-2}$ & $9.851\times 10^{-3}$ & $5.473\times 10^{-3}$ \\ 
$10^{-3}$ & $4.822\times 10^{-2}$ & $2.927\times 10^{-2}$ & $1.699\times 10^{-2}$ & $9.627\times 10^{-3}$ & $5.357\times 10^{-3}$ \\ 
$10^{-4}$ & $4.800\times 10^{-2}$ & $2.914\times 10^{-2}$ & $1.692\times 10^{-2}$ & $9.586\times 10^{-3}$ & $5.334\times 10^{-3}$ \\ 
$10^{-5}$ & $4.798\times 10^{-2}$ & $2.913\times 10^{-2}$ & $1.691\times 10^{-2}$ & $9.582\times 10^{-3}$ & $5.332\times 10^{-3}$ \\ 
$10^{-6}$ & $4.798\times 10^{-2}$ & $2.912\times 10^{-2}$ & $1.691\times 10^{-2}$ & $9.581\times 10^{-3}$ & $5.332\times 10^{-3}$ \\ 
$10^{-7}$ & $4.798\times 10^{-2}$ & $2.912\times 10^{-2}$ & $1.691\times 10^{-2}$ & $9.581\times 10^{-3}$ & $5.332\times 10^{-3}$ \\ 
$10^{-8}$ & $4.798\times 10^{-2}$ & $2.912\times 10^{-2}$ & $1.691\times 10^{-2}$ & $9.581\times 10^{-3}$ & $5.332\times 10^{-3}$ \\ 
  \hline
  $\rho^N$ & 0.678 &    0.720 &    0.784 &    0.820 &    0.846  \\ \hline
  \end{tabular}
  \end{center}
\end{table}

\fix{Our numerical experiments have verified that Theorem~\ref{thm:big_bound} is correct and quite sharp.
For the data corresponding to the first column of \autoref{tab:1D
  errors}, and denoting (a numerical estimate for) the smallest eigenvalue
of $M^{-1}A$ as  $\lambda_{\min}$,
we verify that $\lambda_{\min} \geq 1-8\eps N/(\Clow \alpha)$ by
plotting $1-\lambda_{\min}$ and $8\eps N/(\Clow \alpha)$, where the
largest valid value of $\alpha$ is taken.
Indeed, for the range of $N$ and $\eps$ reported
in~\autoref{tab:1D errors}, 
we observe that $\lambda_{\min}$ is found between
$1-4\eps N/(\Clow\alpha)$ and
$1-\eps N/(2\Clow\alpha)$, 
as long as~\eqref{eq:small epsilon*N} holds.}

\begin{figure}[htb]
  \begin{center}

  \includegraphics[width=0.5\textwidth]{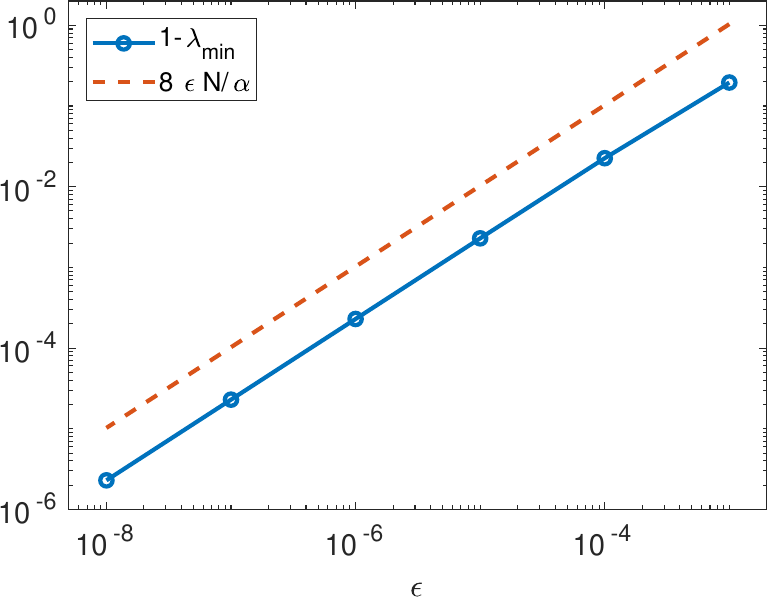}
  \caption{\fix{Comparison of $1 - \lambda_{\min}$ and $8 \eps N /(\Clow
    \alpha)$, where $\lambda_{\min}$ is the smallest eigenvalue of
    $M^{-1}A$, for $N=128$, and $\eps = 10^{-3}, 10^{-4}, \dots, 10^{-8}$.}}
  \label{fig:eig bounds}
  \end{center}
\end{figure}

\subsubsection{Stopping criterion}
Let $U^E$ be the exact solution to the linear system arising from the
scheme of Section~\ref{sec:discretization}, and $U^{(k)}$ be the $k$th
iterate computed by an iterative solver. Let $g(N)$ be the expected
discretization error; for example, for the mesh and method we
consider here,
\[
  g(N):=\|u-U^E\|_{\infty}\le CN^{-1}\ln N.
\]
(See, e.g., \cite{NV19}, for analysis of a mesh where fully first-order convergence is expected.)
We wish to iterate until 
\[
\|u- U^{(k)}\|_{\infty}\le \|u-U^E\|_{\infty} + \| U^E-U^{(k)}\|_{\infty} \le C g(N)
\]
where $C$ is a moderate constant. Of course, we cannot compute
$E^{(k)}=U^E-U^{(k)}$, but we can compute
the residual $R^{(k)}=F-AU^{(k)}=AE^{(k)}$. Therefore we write
$E^{(k)}=A^{-1}R^{(k)}$, giving 
\[
\|U^E-U^{(k)}\|_{\infty}\le \|A^{-1}\|_{\infty}\|R^{(k)}\|_{\infty}.
\]
In contrast to the stopping criterion proposed in
\cite{SMacLachlan_NMadden_2013a} for reaction-diffusion problems, in
which $\|A^{-1}\|_{\infty}$ is unbounded when $\eps\to 0$, for
convection-diffusion problems, the system matrix $A$ defined
in~\eqref{eq:block_A} is an M-matrix.  Thus, it is easy to verify that
(see, for example, \cite{NSV18}) 
\[
\|A^{-1}\|_{\infty}\le C.
\] 
Therefore, we iterate until  $\|R^{(k)}\|_{\infty}\le Kg(N)$,  for
some user-chosen parameter $K$. Numerical experience suggests that
taking $K = \|U^E\|_\infty$ is  reasonable (i.e., an $\mathcal{O}(1)$ value).

\subsubsection{Performance of the preconditioner}
The discretization of \eqref{eq:1D example} leads to a tridiagonal
system which is easily solved using direct methods, even for very
large values of the discretization parameter, $N$. However, it is
instructive to consider the performance of an iterative solver for
this problem, when preconditioned with $M$ as defined
in~\eqref{eq:block_M}.

To that end, in \autoref{tab:1D iteration counts}, we report the number
of iterations required by the MATLAB \texttt{gmres}
function~\cite{Walker88}, modified slightly to implement the stopping
criterion, and with no restarts. The results are for those values of 
$\eps$ and $N$ included in \autoref{tab:1D errors} for which 
\eqref{eq:small epsilon*N} holds.
They show that, as expected, few
iterations are required as $\eps \to 0$. The greatest number of
iterations are required for largest reported values of $\eps$ and $N$,
where, although \eqref{eq:small epsilon*N} holds, 
one has that
$\eps N > \Clow \alpha /2$. In all other cases,
$\eps N \ll \Clow \alpha$, and few iterations are required to ensure
convergence. For example, if $\eps N \leq 0.01$, then no more than 4
iterations are required in any case.

\begin{table}[htb]
  \caption{Iteration counts for GMRES preconditioned with $M$ in~\eqref{eq:block_M}}
  \label{tab:1D iteration counts}
\begin{center}
  \begin{tabular}{c||c|c|c|c|c|c}
    \hline
$\eps$   &  $N=128$  &  $N=256$  &  $N=512$  &  $N=1024$  &  $N=2048$  \\ \hline

$10^{-3}$ &         4 &         -- &        -- &        -- &       -- \\ 
$10^{-4}$ &         2 &         4 &         6 &        14 &        38 \\ 
$10^{-5}$ &         1 &         2 &         3 &         5 &         9 \\ 
$10^{-6}$ &         1 &         1 &         2 &         2 &         4 \\ 
$10^{-7}$ &         1 &         1 &         1 &         2 &         2 \\ 
$10^{-8}$ &         1 &         1 &         1 &         1 &         2
      \\  \hline
    \end{tabular}
\end{center}
\end{table}

\section{Two-dimensional problems}\label{sec:2D}
We now consider the two-dimensional problem on the unit square given
in \eqref{eq:2D}, with homogeneous Dirichlet boundary conditions on
all four sides and $r(x,y) \geq 0$.  We focus on two cases, both where
$\vec{c}(x,y)$ is componentwise non-negative.  In the first case, we
fix $c_2(x,y) = 0$ and require $c_1(x,y) > 0$.  From the standard
theory of convection-diffusion problems, solutions to \eqref{eq:2D} in
this case are expected to exhibit parabolic (characteristic)  boundary
layers of width $\Oh(\sqrt{\eps}\ln(1/\eps))$ along $y=0$ and $y=1$,
and a single exponential boundary layer of width
$\Oh(\eps\ln(1/\eps))$ along $x=0$.  We will focus our discussion on
the case where the forcing function is \textit{compatible} with the
boundary conditions, so that no layer forms along $y=1$, noting that
this is solely for convenience and that all constructions could be
directly extended to handle the case of two parabolic layers.
The second case that we consider is when both $c_1(x,y) > 0$ and
$c_2(x,y) > 0$, which leads to the formation of two exponential layers
in the solution, along $x=0$ and $y=0$.

For both problems, we make use of tensor-product Shishkin meshes for
the discretization, now defining separate transition points in the
$x$- and $y$-directions, denoted by $\tau_x$ and $\tau_y$,
respectively.  A sketch of such a mesh for the case with one parabolic
and one exponential layer is shown in Figure \ref{fig:2D mesh}.  We make the following
standard choices for the transition points on an $N\times N$ mesh for
the first case, where we assume $0 < \Clow < c_1(x,y)$ for all $(x,y)
\in (0,1)^2$,

\begin{equation}\label{eq:tau parabolic}
  \tau_x  =
  \min\left\{\frac{1}{2},\frac{\sigma\eps}{\Clow}\ln(N)\right\},
  \quad \text{ and } \quad
  \tau_y  = \min\left\{\frac{1}{2},\sigma\sqrt{\eps}\ln(N)\right\}.
\end{equation}
Here, we take $\sigma = 5/2$ as a value that is at least as large as
the order of the discretization scheme discussed below.  For the case
of two exponential layers, we assume that both $0 < \Clow_1 <
c_1(x,y)$ and $0 < \Clow_2 < c_2(x,y)$, and 
take

\begin{equation}\label{eq:tau exponential}
  \tau_x  =
  \min\left\{\frac{1}{2},\frac{\sigma\eps}{\Clow_1}\ln(N)\right\}
    \quad \text{ and } \quad
  \tau_y  = \min\left\{\frac{1}{2},\frac{\sigma\eps}{\Clow_2}\ln(N)\right\}.
\end{equation}
In both cases, we then form a standard tensor-product Shishkin mesh,
by first dividing the unit interval on the $x$-axis into $N/2$
equal-sized intervals from $0$ to $\tau_x$ and $N/2$ equal-sized
intervals from $\tau_x$ to $1$ to form the mesh $\Omega_x^N$, then
dividing the unit interval on the $y$-axis into $N/2$ equal-sized
intervals from $0$ to $\tau_y$ and $N/2$ equal-sized intervals from
$\tau_y$ to $1$ to form the mesh $\Omega_y^N$ and, finally, forming
the standard (quadrilateral) tensor-product mesh, $\Omega_x^N \times
\Omega_y^N$.  While we focus on the Shishkin case below, we note that
the ideas developed could equally-well be applied to many fitted
tensor-product mesh constructions appropriate for such singularly
perturbed problems; the key idea that is required for what follows is
the ability to identify a transition point between the ``interior''
region of the mesh, where meshwidths are bounded below by an
$\Oh(1/N)$ value, and the layer regions, where meshwidths may be much
smaller.

\begin{figure}[htb]
  \begin{center}
    \begin{picture}(0,0)%
\includegraphics{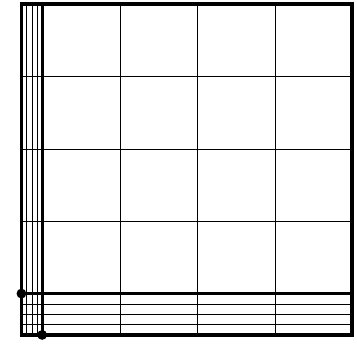}%
\end{picture}%
\setlength{\unitlength}{1450sp}%
\begingroup\makeatletter\ifx\SetFigFont\undefined%
\gdef\SetFigFont#1#2#3#4#5{%
  \reset@font\fontsize{#1}{#2pt}%
  \fontfamily{#3}\fontseries{#4}\fontshape{#5}%
  \selectfont}%
\fi\endgroup%
\begin{picture}(7730,7775)(-464,-6871)
\put(-449,-5506){\makebox(0,0)[b]{\smash{{\SetFigFont{11}{13.2}{\rmdefault}{\mddefault}{\updefault}{\color[rgb]{0,0,0}$\tau_y$}%
}}}}
\put(586,-6811){\makebox(0,0)[b]{\smash{{\SetFigFont{11}{13.2}{\rmdefault}{\mddefault}{\updefault}{\color[rgb]{0,0,0}$\tau_x$}%
}}}}
\end{picture}%

  \caption{Sketch of a tensor-product Shishkin mesh for the case of a
    parabolic layer along the edge $y=0$ and an exponential layer
    along the edge $x=0$.}
  \label{fig:2D mesh}
  \end{center}
\end{figure}

On such a mesh, we make use of a standard upwind finite-difference
discretization for \eqref{eq:2D} on a non-uniform mesh.  At a
mesh point $(x_i,y_j)$ for $1 \leq i,j \leq N$, we define $h_i = x_i -
x_{i-1}$ and $k_j = y_j - y_{j-1}$, with $h_{i+1}$ and $k_{j+1}$
defined similarly, and $\overline{h}_i = (h_i + h_{i+1})/2$ and
$\overline{k}_j = (k_j+k_{j+1})/2$.  The discretization then takes the
same pattern as a standard 5-point finite-difference operator, with
values
\[
\left( \begin{array}{ccc}
  & \frac{-\eps}{\overline{k}_jk_{j+1}} - \frac{c_{2,ij}}{k_{j+1}} \\
  \frac{-\eps}{\overline{h}_ih_i} &
  \frac{\eps}{\overline{h}_i}\left(\frac{1}{h_i}+\frac{1}{h_{i+1}}\right)
  +
  \frac{\eps}{\overline{k}_j}\left(\frac{1}{k_j}+\frac{1}{k_{j+1}}\right)
  + \frac{c_{1,ij}}{h_{i+1}} + \frac{c_{2,ij}}{k_{j+1}} + r_{ij} &
  \frac{-\eps}{\overline{h}_ih_{i+1}} - \frac{c_{1,ij}}{h_{i+1}} \\
   & \frac{-\eps}{\overline{k}_jk_{j}} \end{array}\right),
\]
where $c_{1,ij} = c_1(x_i,y_j)$, $c_{2,ij} = c_2(x_i,y_j)$, and
$r_{ij} = r(x_i,y_j)$.  We note that the upwind finite-difference
discretization results in a discretization matrix that is both
irreducibly diagonally dominant and an M-matrix, since we have $c_{1,ij} > 0$,
$c_{2,ij} \geq 0$, and $r_{ij} \geq 0$.

\subsection{Preconditioner construction}\label{sec:2D M}

In order to develop the preconditioner, we consider reordering and
partitioning the discretization matrix, $A$, into block four-by-four structure,
writing
\[
A = \left[ \begin{array}{cccc} A_{CC} & A_{CX} & A_{CY} & A_{CI} \\
    A_{XC} & A_{XX} & A_{XY} & A_{XI} \\
    A_{YC} & A_{YX} & A_{YY} & A_{YI} \\
    A_{IC} & A_{IX} & A_{IY} & A_{II} \end{array}\right],
\]
where we use the subscripts
  $C$ to denote the corner region, i.e., the mesh points in the rectangle $[0,\tau_x] \times [0,\tau_y]$, 
  $Y$ to denote the region  $(\tau_x,1] \times [0,\tau_y]$, 
  $X$ to denote the region  $[0, \tau_x] \times (\tau_y,1]$,  and
  $I$ to denote the interior region, i.e.,  $(\tau_x,1] \times (\tau_y,1]$.
  As above, we include the transition points in the edge
and corner regions, with the corner region including points with both
$x_i=\tau_x$ and $y_j = \tau_y$, while the edge regions include just
the transition points adjacent to the interior region.  For simplicity
in explanation, we assume that grid points in each block are ordered
lexicographically by index in the mesh, from their lower-left corners
to their upper-right corners.

In both cases under consideration,
we have a convective term in our PDE that ``pushes'' information from
right-to-left and, in the case of two exponential layers, from
top-to-bottom.  Ordering the discretization as above, a natural
structure for a preconditioner, then, is as a block upper-triangular
matrix,
\begin{equation}
  \label{eq:2D blocks}
M = \left[ \begin{array}{cccc} M_{CC} & A_{CX} & A_{CY} & A_{CI} \\
    0 & M_{XX} & A_{XY} & A_{XI} \\
    0 & 0 & M_{YY} & A_{YI} \\
    0 & 0 & 0 & M_{II} \end{array}\right],
\end{equation}
since solution of $Mz=r$ then propagates information from the interior
region to all three other regions, while information from the two edge
regions is propagated to the corner region, matching the natural
convective structure of the system.  Since the off-diagonal blocks of
the matrix are only needed for matrix-vector products to propagate
information from solves with the diagonal blocks, there is no
computational advantage to approximating these.  In contrast, we
consider in detail how to best approximate each of the diagonal blocks
so that the overall cost of solving the linear system $Mz=r$ is only
$\Oh(N^2)$, matching the asymptotic cost of a matrix-vector
multiplication with the system matrix, $A$.

We treat the four diagonal blocks of $M$ in~\eqref{eq:2D blocks}
separately, as follows.
\begin{description}
\item [$M_{II}$:] This block is associated with points in the interior region
  $(\tau_x, 1]\times (\tau_y,1]$. Around a mesh point in this region, we have $h_i,
h_{i+1} \geq C/N$ as well as $k_j, k_{j+1} \geq C/N$, thus, the diffusion terms in the stencil are of size $\Oh(\eps
N^2)$ while the convection terms are $\Oh(N)$ in size.  Under the
typical assumption that $\eps N \ll 1$, this says that the convection
term(s) dominate, and that a good approximation of $A_{II}$ is by
its upper-triangular part, resulting in a downstream Gauss-Seidel
approximation that sweeps from the upper-right corner of the interior
region to the bottom-left corner.  In the case where there is a
parabolic layer, the structure is even simpler, since $c_{2,ij} = 0$
and the system is dominated by only its diagonal and one off-diagonal
term.  Here, any Gauss-Seidel ordering that sweeps from right-to-left
would be acceptable, but it is simpler to use the same ordering in
both cases.  We note the cost of a solve with $M_{II}$ is bounded by
that of a matrix-vector multiplication with $A_{II}$, achieving our
cost goal.

\item [$M_{YY}$:] This block is associated with points in the region
  $(\tau_x, 1]\times [0,\tau_y]$. At a mesh point here we have $h_i,
 h_{i+1} \geq C/N$, while we have much smaller values for
$k_j$ and $k_{j+1}$.  In the case of a classical Shishkin mesh for a
parabolic boundary layer in this region, for
example, we have $k_j = k_{j+1} \approx 2\sqrt{\eps}\ln(N)/N$, while for a
classical Shishkin mesh for an exponential boundary layer in this region (when
$c_{2,ij} \neq 0$), we have $k_j = k_{j+1} \approx 2\eps\ln(N)/N$.  In the
case of a parabolic layer, 
only one off-diagonal term (that to the ``West'', from $(x_i,y_j)$ to
$(x_{i-1},y_{j-1})$ is asymptotically smaller than the rest, with the
meshwidth in the $y$-direction leading to off-diagonal entries of size
$\Oh(N^2/\ln^2(N))$ in the ``North'' and ``South'' directions, while the
convection term leads to the ``East'' off-diagonal entry having size
$\Oh(N)$.  In order to account for these three terms, we approximate
$A_{YY}^{-1}$ by a
block downstream Gauss-Seidel approximation, where we use line-solves
along lines of constant $x$-coordinate, ordered from right-to-left.
These line solves are implemented using Thomas' algorithm, which gives
$\Oh(N)$ cost to each solve and an $\Oh(N^2)$ cost to the inversion
of $M_{YY}$ constructed in this way.  For the case of two exponential
layers, the diffusion terms in the $y$-direction become dominant, of
size $\Oh(N^2/(\eps\ln^2(N))$, again prompting the use of line solves along
lines of constant $x$-coordinate to approximate $A_{YY}^{-1}$.  While the
ordering is less important here, we keep the right-to-left ordering
for simplicity.

\item [$M_{XX}$:] this block is associated with points in the region
  $[0,\tau_x]\times (\tau_y,1]$.  In contrast to the mesh points in the
  region associated with $M_{YY}$, here
we have $k_j, k_{j+1} \geq C/N$, while we have much smaller
values for $h_i$ and $h_{i+1}$ in order to resolve the exponential
boundary layer in the solution at $x=0$.  On a classical Shishkin
mesh, for example, we expect $h_i = h_{i+1} \approx 2\eps\ln(N)/N$.  This
results in relatively small contributions to the matrix from the
diffusion terms in the $y$-direction, which are of size $\Oh(\eps
  N^2)$.  In comparison, the diffusion terms in the $x$-direction are
of size $\Oh(N^2/(\eps\ln^2(N)))$, while the convection terms in the
$x$-direction are of size $\Oh(N/(\eps\ln(N)))$.  This motivates
approximating $A_{XX}^{-1}$ using line solves along lines of constant
$y$-coordinate.  While the North coefficient is never large, the case
of two exponential layers gives a $y$-direction convection
contribution of size $\Oh(N)$; thus, we perform these line solves
sequentially, sweeping from the top of the mesh downwards, to resolve
the convection in the downward direction.  As in the $Y$ region, the cost of each line solve is $\Oh(N)$ operations, and we
perform $\Oh(N)$ of them, giving a total cost of inverting $M_{XX}$
that is still $\Oh(N^2)$.

\end{description}
We devote Section~\ref{sec:multigrid in the corner} to 
the approximation in the corner region of the mesh, but first 
pause to consider a simple bound on the convergence rate of an
idealized form of the preconditioner.  Using the approximations above,
we can define
\[
\hat{M} = \left[ \begin{array}{cccc} A_{CC} & A_{CX} & A_{CY} & A_{CI} \\
    0 & M_{XX} & A_{XY} & A_{XI} \\
    0 & 0 & M_{YY} & A_{YI} \\
    0 & 0 & 0 & M_{II} \end{array}\right],
\]
and consider the classical theory of regular splittings
\cite[\S 3.6]{RSVarga_2000}.  Since $A$ is an irreducibly diagonally
dominant M-matrix, the splitting of $A = \hat{M} - \hat{N}$ is a
regular splitting (since $\hat{M}$ inherits the property of being an
M-matrix by its construction from $A$ \cite[Theorem
  3.25]{RSVarga_2000}, and the implicit definition of $\hat{N} =
\hat{M} - A$ yields a component-wise non-negative matrix).  Thus,
\cite[Theorem 3.29]{RSVarga_2000} gives us a bound on the spectral
radius of the stationary iteration whose error-propagation operator is
given by $I - \hat{M}^{-1}A$, as
\[
\rho\left(I-\hat{M}^{-1}A\right) =
\frac{\rho\left(A^{-1}\hat{N}\right)}{1+\rho\left(A^{-1}\hat{N}\right)}
< 1.
\]
We note that the quotient given is monotone increasing with
$\rho\left(A^{-1}\hat{N}\right)$, so that any upper bound that we get on
this spectral radius gives an upper bound on that of
$I-\hat{M}^{-1}A$,
\[
\rho\left(A^{-1}\hat{N}\right) \leq K \Rightarrow
\rho\left(I-\hat{M}^{-1}A\right) \leq \frac{K}{1+K}.
\]
A natural bound to use is that
\[
\rho\left(A^{-1}\hat{N}\right) \leq \left\|A^{-1}\hat{N}\right\| 
\leq \left\|A^{-1}\right\|\left\|\hat{N}\right\|,
\]
where we will consider the standard matrix norm induced by the
discrete maximum norm.
By a standard barrier-function technique \cite{Ro96}, there exists
a constant, $C$, such that $\left\|A^{-1}\right\|
\leq C$ (taking the vector, $W$, whose value at the degree of freedom
associated with grid point $(x_i,y_j)$ is $1+x_i$, so that $AW \geq
\Clow$ (or $\Clow_x$) in the pointwise sense, but $\|W\|\leq 2$).  For the bound on
$\|\hat{N}\|$, we note that $\hat{N}$ has at most two
nonzero entries in each of its rows or columns (by construction), and
that all of these entries are of the form of either
$\eps/(\overline{h}_ih_i)$ or $\eps/(\overline{k}_jk_j)$.  All of
the entries in $\hat{N}$, however, are associated with points
$(x_i,y_j)$ where there exists a
constant, $C$, such that $h_i, \overline{h}_i, k_j, \overline{k}_j
\geq C/N$ for all such entries dropped from $A$.  Thus, there exists a constant,
$C$, such that $\|\hat{N}\| < C\eps N^2$.  Taken together,
these give us the bound that
\[
\rho\left(I-\hat{M}^{-1}A\right) \leq \frac{C\eps N^2}{1+C\eps N^2}.
\]
We note that this bound is suboptimal in comparison to Theorem \ref{thm:big_bound}, since we typically assume that
$\eps N \ll 1$, but not that $\eps N^2$ is bounded by a constant.  Nonetheless, it is an
improvement on standard bounds on the condition number of the
unpreconditioned system, $\kappa(A) \leq C\frac{N^2}{\eps(\ln N)^2}$,
or a diagonally preconditioned system, $\kappa(\hat{D}^{-1}A) \leq
C\frac{N^2}{\ln N}$ \cite{Ro96}.  As always with nonsymmetric systems,
convergence of either a stationary or Krylov iteration depends on much
more than the condition number of the system; however, this is an
indication that the preconditioner construction should lead to
improved performance for iterations preconditioned in this way.

\subsection{Multigrid for the corner region}
\label{sec:multigrid in the corner}
Finally, we consider the case of the approximation of $A_{CC}$,
corresponding to mesh points in
$[0,\tau_x]\times[0,\tau_y]$. 
 In this region, the mesh is refined in both the $x$- and
$y$-directions, leading to discrete problems where the diffusion terms
in both directions are no longer dominated by the convection terms.
In such cases, multigrid methods are well-recognized as providing
excellent approximations to $A_{CC}^{-1}$ that can be implemented with
$\Oh(N^2)$ computational cost.  Here, we discuss the details of the
construction of such methods.  Since the methods we adopt are quite
different for the two cases we consider, we present the methods
independently.

On a classical Shishkin mesh for a problem with one parabolic and one
exponential boundary layer, the transition points are as
in~\eqref{eq:tau parabolic}, and
we expect $h_i = h_{i+1} \approx 2\eps\ln(N)/N$
and $k_j = k_{j+1} \approx 2\sqrt{\eps}\ln(N)/N$.  This gives off-diagonal
entries of size $\Oh(N^2/\ln^2(N))$ in the North and South directions on the
mesh, but of size $\Oh(N^2/(\eps\ln^2(N)))$ in the West and East directions.  In
essence, the problem much more resembles a classical anisotropic
diffusion operator than a singularly perturbed convection-diffusion
operator.  As a result, we approximate $A_{CC}^{-1}$ by the action of
a multigrid cycle appropriate to an anisotropic problem.  In
particular, we make use of a semi-coarsening multigrid algorithm in
this case, where the coarse grids are formed by factor-2 coarsening in
only the $x$-direction.  As a relaxation scheme, we use pointwise
Gauss-Seidel, again ordered in a ``downstream'' direction, ordered
from the top right point in the corner region to the bottom left.  We
use a standard V(1,1) cycling strategy, and approximate a solve on the
coarsest grid by four sweeps of the downstream Gauss-Seidel relaxation.

In order to properly account for possible variations in the mesh size
and the effects of the convection term, we use a Galerkin coarsening
algorithm, with each coarse-grid operator formed by the triple-product
of a restriction operator, the fine-grid operator, and an
interpolation operator.  For ease of construction, on the finest grid
(the discretization mesh), we perform a row-wise rescaling of the
finite-difference discretization (only within the multigrid cycle on
the corner region, with a corresponding rescaling on the residual in
this region before the cycle is applied), multiplying the row of the
matrix that corresponds to node $(x_i,y_j)$ by
$\overline{h}_i\overline{k}_j$; in essence, this rescales the problem
from a finite-difference-like scaling to one more akin to a
finite-element discretization, where Galerkin coarsening is more
natural.  With this rescaling, we define a one-dimensional
interpolation operator from the coarse grid to fine-grid node
$(x_i,y_j)$ by first ``collapsing'' the matrix stencil corresponding
to this row in the North-South direction into a 3-point operator.  Adopting the notation of writing
$a_{(i,j),(k,\ell)}$ for the entry in the matrix in row corresponding
to node $(x_i,y_j)$ and column corresponding to node $(x_k,y_\ell)$,
we define the interpolation operator to fine-grid node $(x_i,y_j)$
with entries
\begin{align*}
&-\frac{a_{(i,j),(i-1,j-1)}+a_{(i,j),(i-1,j)}+a_{(i,j),(i-1,j+1)}}{a_{(i,j),(i,j-1)}+a_{(i,j),(i,j)}+a_{(i,j),(i,j+1)}}\\
\text{and }&
-\frac{a_{(i,j),(i+1,j-1)}+a_{(i,j),(i+1,j)}+a_{(i,j),(i+1,j+1)}}{a_{(i,j),(i,j-1)}+a_{(i,j),(i,j)}+a_{(i,j),(i,j+1)}},
\end{align*}
for the weights of interpolation to fine-grid node $(x_i,y_j)$ from the coarse-grid nodes associated
with points $(x_{i-1},y_j)$ and $(x_{i+1},y_j)$, respectively.
Note that, while we have only a five-point stencil on the finest grid,
the use of such Galerkin coarsening leads to nine-point stencils on
all coarse grids, so we define the interpolation operator for the
general case, and use a similar formula (adapted only to account for
the coarsening) on all grids.  On all grids, we use the transpose of
this operator as the restriction operator.  Such an operator-induced
interpolation operator is inspired by the BoxMG algorithm
\cite{REAlcouffe_ABrandt_JEDendy_JWPainter_1981a,JEDendy_1982a}, which
uses a similar technique for anisotropic problems.

In preliminary numerical experiments, we found that using a single
cycle of the above scheme did not lead to scalable results for a
reasonable range of values for $N$ and $\eps$.  Instead, we use a
residual-reduction based tolerance, with $M_{CC}^{-1}$ defined by
performing as many cycles of the above method as needed to reduce the
residual over the corner region of the mesh by a relative factor of
$10^2$.  In results reported below, this requires only 3 V-cycles;
however, in other experiments, one or two more cycles were sometimes needed to reach this
tolerance.  Other options for gaining more robustness would be to
increase the number of relaxation sweeps used on each level, or to
switch to using a direct solver for the coarsest-grid system, but
neither of these were thoroughly explored, as the strategy above did
not lead to any apparent outliers in the data.

For the case of two exponential layers the
transition points are of the same order of magnitude, see
\eqref{eq:tau exponential}, and so the off-diagonal terms in
$A_{CC}$ are much more balanced in the $x$- and $y$-directions,
allowing a simpler cycling structure.  Here, we use a full-coarsening
multigrid algorithm, coarsening by a factor of two in each direction.
We use rediscretization to define the coarse-grid operators, and
define geometric (bilinear) interpolation that accounts for the mesh
spacing within the corner region.  On Shishkin meshes, this coincides
with the classical bilinear interpolation operator on uniform meshes,
with interpolation weights of $1/2$ for fine-grid points that are
directly adjacent to two coarse-grid points, and weights of $1/4$ for
fine-grid points that are the centre of a coarse-grid cell.  We again
rescale by factors of $\overline{h}_i\overline{k}_j$ from the
finite-difference to finite-element style of scaling, allowing us to
use the transpose of this interpolation operator as restriction.  (We
note that such rescaling can be avoided on uniform meshes if one uses
``full weighting'' restriction, but this is equivalent to what we do.)
The cycling structure in this case matches that for the case of one
parabolic and one exponential layer, with the same approximate
coarse-grid solve.  Here, we found better results by defining
$M_{CC}^{-1}$ to correspond to stationary iteration with this cycle as
needed to reduce the residual over the corner region of the mesh by a
relative factor of $10^3$, which is again quickly reached (in 5
iterations for the results reported below).

We note that the approaches described above both differ significantly
from the method of \cite{GaClLi02}.  That paper described a multigrid
algorithm to be applied to the same discretization on Shishkin meshes,
but aimed at preconditioning the full system, and not just the
discretization in the corner region.  There, full-coarsening multigrid
was applied using rediscretized coarse-grid operators and tuned
intergrid transfer operators that were derived from the Shishkin mesh
structure and the problem under consideration (with two exponential
layers).  Furthermore, an alternating line Gauss-Seidel relaxation was
used.  While the method proposed in \cite{GaClLi02} was generally
successful, our overall preconditioner has a lower cost per cycle,
because it focuses the numerical effort on the region of the mesh
where it is needed most.

\subsection{Numerical examples}\label{sec:2D numbers}
We test the preconditioner developed above to solve two model
problems, one that exhibits both a parabolic and an exponential layer,
and one that exhibits two exponential layers.  In both cases, we use
the preconditioner described above with FGMRES \cite{YSaad_2003a} as
the outer Krylov method.  As in one dimension (and as discussed
above), we have the bound that $\|A^{-1}\| < C$, for some constant $C$
that is independent of $\eps$ and $N$, so we use a direct
residual-based stopping tolerance on the expected almost-first-order
discretization error on the Shishkin meshes considered here, iterating
until $\|R^{(k)}\| \leq 10N^{-1}\ln(N)$.  While the above bound is on
the discrete maximum norm of the matrix, the nature of FGMRES requires
the stopping tolerance to be evaluated in the Euclidean norm, which we
do. The algorithm is implemented in C and was compiled using
gcc (version 8.2.0).
All numerical results in this section were run, \fix{in serial, on a
             single core of a 2.4 GHz Xeon processor on a system with 
192 GiB of RAM}.  For comparison, we consider a
direct solution of the same linear systems using UMFPACK~\cite{Da04}.

As a first example, for the case of one parabolic layer and one
exponential layer, we consider the solution of
  \[
-\eps \Delta u - u_x + u = f \text{ on }(0,1)^2,
\]
with $f(x,y)$ chosen to yield a manufactured solution of
\begin{equation}
  \label{eq:parabolic_MMS}
u(x,y) = \left(\cos\left(\frac{\pi x}{2}\right)-
\frac{e^{-x/\eps}-e^{-1/\eps}}{1-e^{-1/\eps}}\right)\left(\frac{1 - e^{-y/\sqrt{\eps}}}{ 1 - e^{-1/\sqrt{\eps}}}-y^{5/2}\right).
\end{equation}

To validate both the discretization and the chosen stopping tolerance,
Table \ref{tab:parabolic_errors} shows discretization errors for the
discrete solutions found by the algorithm.  We note that these show
the expected steadiness as $\eps \rightarrow 0$, and the expected
decay with large $N$.  Preconditioned FGMRES iteration counts are
shown in parentheses in Table \ref{tab:parabolic_times_iters}.  Here, we see that the
iteration counts are quite steady as $\eps \rightarrow 0$ and for
varying values of $N$, aside from in the top-right corner of the
table.  Here, $\eps N$ is not small enough for our
heuristics to suggest that we are in the right range for the
preconditioner to be effective, so the degradation in performance is
not too surprising.

\begin{table}[htb]
  \begin{center}
  \caption{Approximation error, measured in the discrete maximum norm,
    for manufactured solution in \eqref{eq:parabolic_MMS} generated by
    preconditioned FGMRES.}
  \label{tab:parabolic_errors}
\setlength{\tabcolsep}{3pt}
\begin{tabular}{c||c|c|c|c|c}
  \hline
  $\eps$   & $N = 2^7$ &  $N = 2^8$ &  $N = 2^9$ &  $N = 2^{10}$ &  $N = 2^{11}$ \\
  \hline
  $10^{-5}$ & $3.822\times 10^{-2}$ & $2.204\times 10^{-2}$ & $1.242\times 10^{-2}$ & $6.915\times 10^{-3}$ & $3.783\times 10^{-3}$  \\
  $10^{-6}$ & $3.823\times 10^{-2}$ & $2.205\times 10^{-2}$ & $1.244\times 10^{-2}$ & $6.903\times 10^{-3}$ & $3.783\times 10^{-3}$  \\
  $10^{-7}$ & $3.823\times 10^{-2}$ & $2.205\times 10^{-2}$ & $1.244\times 10^{-2}$ & $6.902\times 10^{-3}$ & $3.783\times 10^{-3}$  \\
  $10^{-8}$ & $3.823\times 10^{-2}$ & $2.205\times 10^{-2}$ &
                                                              $1.244\times 10^{-2}$ & $6.902\times 10^{-3}$ & $3.783\times 10^{-3}$ \\
  \hline
  \end{tabular}
  \end{center}
\end{table}

\begin{table}[htb]
  \begin{center}
\caption{CPU times (in seconds) for preconditioned FGMRES to achieve residual
  stopping tolerance for manufactured solution in
  \eqref{eq:parabolic_MMS}.  Corresponding iteration counts are given
  in parentheses.}
\label{tab:parabolic_times_iters}
  \begin{tabular}{c||c|c|c|c|c}
    \hline
  $\eps$   & $N = 2^7$ &  $N = 2^8$ &  $N = 2^9$ &  $N = 2^{10}$ &  $N = 2^{11}$ \\
      \hline
$10^{-5}$ & 0.007 (3) & 0.037 (4) & 0.205 (5) & 1.536 (9) & 14.804 (23) \\
$10^{-6}$ & 0.007 (3) & 0.028 (3) & 0.165 (4) & 0.868 (5) & 4.581 (8) \\
$10^{-7}$ & 0.007 (3) & 0.036 (4) & 0.165 (4) & 0.707 (4) & 3.525 (5) \\
$10^{-8}$ & 0.009 (4) & 0.036 (4) & 0.165 (4) & 0.868 (5) & 3.524 (5) \\
      \hline
    \end{tabular}
\end{center}
\end{table}

The timing data presented in Table \ref{tab:parabolic_times_iters}
shows that the CPU times scale largely as expected, growing
proportionately to iteration counts and problem sizes.  We note that
for $\eps < 10^{-5}$, the solution time for each problem on the
$1024\times 1024$ mesh is less than 1 second.  This includes both the
setup of the preconditioners (assembling the tridiagonal systems for
the two edge-layer regions, performing the forward sweep of the Thomas
algorithm to factor these systems, and computing all necessary
components on all levels of the multigrid algorithm for the corner
region) and the preconditioned FGMRES solve time (including the
residual and preconditioned residual calculations, modified
Gram-Schmidt and Arnoldi steps, and construction of the solution once
converged).  As a comparison, Table \ref{tab:parabolic_speedup} shows
the speedup factors achieved for the preconditioned FGMRES iteration
over a direct solution using \fix{UMFPACK} on the same machine.  As
expected, the direct solution cost grows faster than $\Oh(N^2)$ as the
mesh is refined and, so, the speedup generally increases with larger
$N$ (aside from the top-right corner, where iteration counts increase
for the preconditioned FGMRES iteration). \fix{Table \ref{tab:parabolic_speedup} also shows the \textit{number of digits of accuracy} in the preconditioned FGMRES solution, defined as $\log_{10}\left(\|U^E\|_\infty/\|U^E-U^{(k)}\|_\infty\right)$, where $U^E$ is the solution returned by UMFPACK (treated as the exact solution to the linear system) and $U^{(k)}$ is the iterative solution at iteration $k$, when the stopping tolerance is satisfied (as reported in Table \ref{tab:parabolic_times_iters}).  We note that for $\eps < 10^{-5}$, we consistently match the direct solution to at least 3 digits of accuracy, and often more.}

\begin{table}[htb]
  \begin{center}
\caption{Speedup of preconditioned FGMRES to achieve residual
  stopping tolerance for manufactured solution in
  \eqref{eq:parabolic_MMS} over direct solution using \fix{UMFPACK}.  \fix{In parentheses, the number of digits of
  accuracy in the preconditioned FGMRES solution.}}
\label{tab:parabolic_speedup}
  \begin{tabular}{c||c|c|c|c|c}
    \hline
  $\eps$   & $N = 2^7$ &  $N = 2^8$ &  $N = 2^9$ &  $N = 2^{10}$ &  $N = 2^{11}$ \\
      \hline
$10^{-5}$ &   11.32 (4.0) &   16.66 (3.6) &   26.73 (2.1) &   23.58 (1.7) &   17.00 (2.6) \\
$10^{-6}$ &   13.28 (5.7) &   22.66 (4.3) &   29.35 (4.4) &   34.82 (3.4) &   43.68 (3.1) \\
$10^{-7}$ &   13.80 (4.1) &   17.96 (5.7) &   29.42 (5.0) &   39.16 (4.5) &   47.29 (5.1) \\
$10^{-8}$ &   10.79 (5.9) &   16.82 (4.6) &   25.32 (4.1) &   27.38 (4.3) &   40.68 (5.2) \\
      \hline
    \end{tabular}
\end{center}
\end{table}

Our second example is for the case of two exponential layers, where we
solve
\[
-\eps \Delta u - 2u_x -3u_y + u = f \text{ on }(0,1)^2,
\]
with $f(x,y)$ chosen to yield a manufactured solution of
\begin{equation}
  \label{eq:exponential_MMS}
  u(x,y) = \left(\cos\left(\frac{\pi x}{2}\right)\right)\left(1-e^{-2x/\eps}\right)\left(1-y\right)^3\left(1-e^{-3y/\eps}\right)
\end{equation}

Again, to validate both the discretization and the chosen stopping
tolerance, Table \ref{tab:exponential_errors} presents discretization
errors for the solutions found by the algorithm, for a slightly
different range of values of $\eps$ because of the different layer
structure in this case.  As before, this shows performance that is
clearly bounded independently of $\eps$ and decays in the expected way
with increasing $N$.  Preconditioned FGMRES iterations are shown in
parentheses in Table \ref{tab:exponential_times_iters}, and are very
similar to those seen for the case of one parabolic and one
exponential layer.  As before, we note the degradation in performance
in the upper-right corner of the table, for larger values of $\eps N$
where the heuristics motivated above do not apply.

\begin{table}[htb]
  \begin{center}
  \caption{Approximation error, measured in the discrete maximum norm,
    for manufactured solution in \eqref{eq:exponential_MMS} generated by
    preconditioned FGMRES.}
  \label{tab:exponential_errors}
\setlength{\tabcolsep}{3pt}
  \begin{tabular}{c||c|c|c|c|c}
    \hline
  $\eps$   & $N = 2^7$ &  $N = 2^8$ &  $N = 2^9$ &  $N = 2^{10}$ &  $N = 2^{11}$ \\
      \hline
$10^{-4}$ & $3.728\times 10^{-2}$ & $2.260\times 10^{-2}$ & $1.323\times 10^{-2}$ & $7.570\times 10^{-3}$ & $4.248\times 10^{-3}$  \\
$10^{-5}$ & $3.729\times 10^{-2}$ & $2.261\times 10^{-2}$ & $1.325\times 10^{-2}$ & $7.572\times 10^{-3}$ & $4.248\times 10^{-3}$  \\
$10^{-6}$ & $3.729\times 10^{-2}$ & $2.261\times 10^{-2}$ & $1.325\times 10^{-2}$ & $7.572\times 10^{-3}$ & $4.248\times 10^{-3}$  \\
$10^{-7}$ & $3.730\times 10^{-2}$ & $2.261\times 10^{-2}$ & $1.325\times 10^{-2}$ & $7.572\times 10^{-3}$ & $4.248\times 10^{-3}$  \\
      \hline
  \end{tabular}
  \end{center}
\end{table}

\begin{table}[htb]
  \begin{center}
\caption{CPU times (in seconds) for \fix{preconditioned} FGMRES to achieve residual
  stopping tolerance for manufactured solution in
  \eqref{eq:exponential_MMS}.  Corresponding iteration counts are given
  in parentheses.}
\label{tab:exponential_times_iters}
  \begin{tabular}{c||c|c|c|c|c}
    \hline
  $\eps$   & $N = 2^7$ &  $N = 2^8$ &  $N = 2^9$ &  $N = 2^{10}$ &  $N = 2^{11}$ \\
      \hline
$10^{-4}$ & 0.010 (3) & 0.041 (4) & 0.266 (6) & 2.656 (14) & 37.690 (40) \\
$10^{-5}$ & 0.010 (4) & 0.041 (4) & 0.178 (4) & 1.103 (6)  & 7.625  (10) \\
$10^{-6}$ & 0.010 (4) & 0.041 (4) & 0.221 (5) & 0.921 (5)  & 3.771  (5) \\
$10^{-7}$ & 0.010 (4) & 0.051 (5) & 0.221 (5) & 0.923 (5)  & 4.519  (6) \\
      \hline
    \end{tabular}
\end{center}
\end{table}

Considering the solution times shown in Table \ref{tab:exponential_times_iters}, we again see scaling as expected, with total time-to-solution that scales directly with iteration counts and problem sizes.  Notably, the $\Oh(N^2)$ cost per iteration clearly scales through the largest problem size, and loss of scalability in solve times for large $N$ is directly due to increasing iteration counts in the case where $\eps N$ is large.  As before, the reported timings include all costs for the setup and preconditioned FGMRES iterations, and we note that the cost per iteration is quite comparable for the preconditioner in this case to that of one parabolic and one exponential layer.  Again, we see substantial speedups over a direct solve using \fix{UMFPACK}, documented in Table \ref{tab:exponenetial_speedup}, contrasting the poor scaling in $N$ of a direct solver with the $\Oh(N^2)$ total solution cost seen here for smaller values of $\eps$.
\fix{Table \ref{tab:exponenetial_speedup} also shows the number of digits of accuracy in the preconditioned FGMRES solution, calculated as above.  As before, we see that we achieve at least 2 digits of accuracy in all cases, and generally more for smaller $\eps$.}

\begin{table}[htb]
  \begin{center}
\caption{Speedup of preconditioned FGMRES to achieve residual
  stopping tolerance for manufactured solution in
  \eqref{eq:exponential_MMS} over direct solution using
  \fix{UMFPACK}.  \fix{In parentheses, the number of digits of
  accuracy in the preconditioned FGMRES solution.}}
\label{tab:exponenetial_speedup}
  \begin{tabular}{c||c|c|c|c|c}
    \hline
  $\eps$   & $N = 2^7$ &  $N = 2^8$ &  $N = 2^9$ &  $N = 2^{10}$ &  $N = 2^{11}$ \\
      \hline
$10^{-4}$ &   6.81 (3.3) &   13.92 (3.0) &   18.73 (2.6) &   13.03 (3.3) &    7.05 (3.6)  \\ 
$10^{-5}$ &   7.75 (5.8) &   13.92 (5.5) &   27.84 (4.2) &   29.68 (4.3) &   26.69 (4.0)  \\ 
$10^{-6}$ &   8.04 (4.8) &   13.72 (4.4) &   19.85 (7.0) &   29.02 (6.9) &   43.09 (3.8)  \\ 
$10^{-7}$ &   8.33 (3.8) &   12.04 (5.9) &   18.94 (6.0) &   27.64 (5.6) &   32.51 (4.6)  \\ 
      \hline
    \end{tabular}
\end{center}
\end{table}

\section{Conclusions}
\label{sec:conclusions}
In this paper, we have extended the ideas of boundary-layer
preconditioning for singularly perturbed problems, first proposed for
the reaction-diffusion case in \cite{SMacLachlan_NMadden_2013a}, to
the case of convection-diffusion.  As is typically the case, the
extension from the symmetric to non-symmetric case requires the
development of new tools to extend the theory accordingly, but we are
able to provide a sharp bound on the conditioning of the
preconditioned system in one dimension, and a weaker bound in two
dimensions.  Numerical results demonstrate excellent performance of
the preconditioner in one and two dimensions, for problems with both
exponential and parabolic layers.

In future work, we will consider the extension of these
preconditioners to finite-element discretizations of both linear
convection-diffusion problems and nonlinear problems with boundary
layers, such as Navier-Stokes flow in a channel.  We note that while
the work presented here focuses on the case of boundary layers, there
is no conceptual restriction that prevents applying the technique to
interior layers, so long as the layer structure in the mesh is
available for construction of the preconditioner.  \fix{In the case of non-regular domains or unstructured grids, the regions defined above can be generalized based on whether they include refinement in zero, one, or two dimensions.  In regions of no refinement (corresponding to the interior region above), knowledge of node location and convection coefficients can be used to develop a downstream ordering for the Gauss-Seidel approximation.  In regions of refinement in one dimension, line relaxation can be generalized based on bin-sorting of geometric coordinates along the non-refined direction.  Finally, in regions with refinement in both directions, algebraic multigrid can be used to replace the geometric multigrid used here.  While this clearly requires more information than is typically used in global algebraic multigrid approaches, it is feasible given basic information about the geometry, mesh, and coefficients in the PDE.}

\fix{Another possible direction for future work would be the extension
                of these techniques to three-dimensional problems.
                Here, as in the reaction-diffusion case discussed in
                \cite[\S 5]{SMacLachlan_NMadden_2013a}, the number of types of regions in the mesh increases, but no fundamental changes occur in the strategy.  Depending on the number of nonzero values in the convection coefficient, a region of the mesh may be of finer resolution in zero, one, two, or three spatial dimensions.  The cases of refinement in zero or one dimension are similar to those discussed here, while appropriate plane solves (using multigrid methods appropriate for two dimensions) would be needed for regions with two refined dimensions, and a fully coupled solve (using multigrid methods appropriate for three dimensions) would be needed in any corner regions with three refined dimensions.}

\bibliographystyle{siam}
\bibliography{spp}

\end{document}